\theoremstyle{plain}
\newtheorem{theorem}{Theorem}
\newtheorem{lemma}[theorem]{Lemma}
\theoremstyle{definition}
\theoremstyle{remark}
\newtheorem*{remark}{Remark}
\numberwithin{equation}{section}
\DeclareMathOperator{\supp}{supp}
\DeclareMathOperator{\divg}{div}
\DeclareMathOperator{\rot}{rot}
\DeclareMathOperator{\loc}{loc}
\DeclareMathOperator{\dist}{dist}
\renewcommand{\phi}{\varphi}
\newcommand{\norm}[1]{\lVert #1 \rVert}
\newcommand{\eps}{\varepsilon}
\newcommand{\R}{\mathbb{R}}
\title{\textbf{A Magnetic Contribution to the Hardy Inequality}}
\author{Tomas Ekholm, Fabian Portmann}
\begin{document}
\maketitle

\begin{abstract}
	We study the quadratic form associated to the kinetic energy operator in the presence of an external magnetic field in $d = 3$. We show that if the radial 
	component of the magnetic field does not vanish identically, then the classical lower bound given by Hardy is improved by a non-negative potential term depending on 
	properties of the magnetic field.
\end{abstract}

\section{Overview and Results}
\subsection{Introduction}
The classical Hardy inequality states that for $d\geq 3$,
\begin{align}\label{eq:hardy_ineq}
	\int_{\R^d} \left|\nabla u(x)\right|^2\,dx - \frac{(d-2)^2}{4}\int_{\R^d}\frac{|u(x)|^2}{|x|^2}\,dx \geq 0
\end{align}
holds for any function $u \in C_c^{\infty}(\R^d)$, where the constant $(d-2)^2/4$ is sharp. There has been a lot of work surrounding the above inequality, for a review see for example \cite{D}. If we define the quadratic form
\begin{align*}
	h[u] := \int_{\R^d} \left|\nabla u(x)\right|^2\,dx - \frac{(d-2)^2}{4}\int_{\R^d}\frac{|u(x)|^2}{|x|^2}\,dx
\end{align*}
on $C_c^{\infty}(\R^d)$, \eqref{eq:hardy_ineq} tells us that $h$ is non-negative. Through the Friedrichs extension we can thus define the operator
\begin{align*}
	H := -\Delta-\frac{(d-2)^2}{4|x|^2}.
\end{align*}
The operator $H$ is then again non-negative and in fact also critical. This means that subtracting any non-negative potential $w$ from $H$ will immediately create negative spectrum, see for example \cite{W1} and \cite{EF}.

We stress that the assumption $d\geq3$ in \eqref{eq:hardy_ineq} and the above discussion is crucial; in $d=2$ the classical Hardy inequality is trivial, since no estimate of the form
\begin{align}\label{eq:hardy_ineq_2d}
	\int_{\R^2} \left|\nabla u(x)\right|^2\,dx \geq \int_{\R^2}w(x)|u(x)|^2\,dx
\end{align}
with a non-trivial $w \geq 0$ holds for all $u \in C_c^{\infty}(\R^2)$. The failure of any inequality of the type \eqref{eq:hardy_ineq_2d} shows that the operator $-\Delta$ by itself is already critical. The corresponding virtual level for the two-dimensional Laplace operator was investigated in \cite{BS} and \cite{S1}.

When studying physical systems it is relevant to estimate the kinetic energy in the presence of an external magnetic field $B: \R^3 \to \R^3$, satisfying $\divg B=0$. In this case the momentum operator $(-i\nabla)$ is replaced with the magnetic operator $(-i\nabla - A)$, $A: \R^3 \to \R^3$ being a magnetic vector potential with $\rot A = B$. The kinetic energy is now linked to the operator $(-i\nabla -A)^2$, defined via the magnetic Dirichlet form
\begin{align}\label{eq:magn_dir_form}
	\int_{\R^3}|(-i\nabla - A(x))u(x)|^2 \,dx
\end{align}
with form domain
\begin{align*}
	H_A^1(\R^3) = \{u \in L^2(\R^3): (-i\partial_j - A_j)u \in L^2(\R^3), j=1,2,3\}.
\end{align*}
We will henceforth assume that $A_j \in L_{\loc}^2(\R^3)$ for $j=1,2,3$. In \cite{S2} it was shown that for this class of magnetic vector potentials, $C_c^{\infty}(\R^3)$ is a form core for \eqref{eq:magn_dir_form}, allowing us to study $(-i\nabla-A)^2$ via \eqref{eq:magn_dir_form} on smooth functions instead. Furthermore, the pointwise diamagnetic inequality holds (see for example \cite{LL})
\begin{align*}
	|(-i\nabla - A(x))u(x)| \geq |\nabla|u(x)||,
\end{align*}
showing that if $u \in H_A^1(\R^3)$, then $|u| \in H^1(\R^3)$. Hence, one can recover Hardy's estimate even in the magnetic case; any $u \in C_c^{\infty}(\R^3)$ satisfies
\begin{align}\label{eq:magn_hardy_ineq}
	\int_{\R^3}|(-i\nabla - A(x))u(x)|^2 \,dx &\geq \int_{\R^3}|\nabla|u(x)||^2 \,dx\nonumber\\
	&\geq \frac{1}{4}\int_{\R^3}\frac{|u(x)|^2}{|x|^2}\,dx.
\end{align}
Since a magnetic field a priori only improves the nature of (or "lifts") the bottom of the spectrum, one is tempted to ask whether the operator
\begin{align*}
	H_A := (-i\nabla - A)^2-\frac{1}{4|x|^2},
\end{align*}
again defined through the Friedrichs extension of its corresponding quadratic form and known to be non-negative from \eqref{eq:magn_hardy_ineq}, is still critical. In \cite{W2}, the author showed that the introduction of any non-trivial magnetic field will always remove the criticality. For any compact $\Omega \subset \R^3$ there exists an inexplicit constant $C=C(\Omega,B)$ such that
\begin{align*}
	\int_{\R^3}|(-i\nabla - A(x))u(x)|^2 \,dx - \frac{1}{4}\int_{\R^3}\frac{|u(x)|^2}{|x|^2}\,dx \geq C \int_{\Omega}|u(x)|^2\,dx,
\end{align*}
hence it is possible to add a non-trivial potential term to the right hand side of \eqref{eq:magn_hardy_ineq}. The method however allows no control of the constant $C(\Omega,B)$.

Furthermore, in \cite{LW} it was shown that in two dimensions the presence of certain types of magnetic field allows for explicit Hardy inequalities. For these magnetic vector fields $B$, we have
\begin{align*}
	\int_{\R^2}|(-i\nabla - A(x))u(x)|^2 \,dx \geq C \int_{\R^2}\frac{|u(x)|^2}{1+|x|^2}\,dx,
\end{align*}
for any $u \in C_c^{\infty}(\R^2)$, where the explicit constant $C$ strongly depends on the behaviour of the flux $\Phi(r) = \frac{1}{2\pi}\int_{|x|<r} B\,dx$. In the special case of a Ahronov-Bohm field
\begin{align*}
	A(x) = \alpha\left(\frac{-x_2}{x_1^2+x_2^2}, \frac{x_1}{x_1^2+x_2^2}\right),
\end{align*}
where $\alpha \in \R$ is the total flux, one obtains
\begin{align*}
	\int_{\R^2}|(-i\nabla - A(x))u(x)|^2 \,dx \geq \min_{k \in \mathbb{Z}}(k - \alpha)^2 \int_{\R^2} \frac{|u(x)|^2}{|x|^2}\,dx
\end{align*}
for any $u \in C_c^{\infty}(\R^2\setminus\{0\})$.

Inspired by the above, we show that in the presence of a magnetic field with non-trivial radial component, \eqref{eq:magn_hardy_ineq} can be improved by adding an explicit non-negative potential term $\int_{\R^3}w(x)|u(x)|^2\,dx$ to the righthand side.
\begin{remark}
	Note that $w$ should depend on $B$ and not on $A$ due to the gauge invariance of the problem. When considering a general gauge transformation
	\begin{align*}
		A(x) \mapsto A(x) + \nabla \Psi(x), \quad \Psi \in C^1(\R^3),
	\end{align*}
	we see that $H_A$ and $H_{A + \nabla \Psi}$ are unitarily equivalent by the unitary transformation $U(x) = e^{i\Psi(x)}$, hence the spectrum remains unchanged and 
	$w$ should depend on the physical quantity $B$ only. In the case of a pure gauge field $A = \nabla \Psi$, $w$ is identically zero, illustrating the fact that we are 
	dealing with the original critical operator $H$.
\end{remark}

\subsection{Main Results}
We will, unless explicitly stated, always assume that we are working with a magnetic field $B$ fulfilling the reasonable physical conditions.
\paragraph{Assumptions on the magnetic field $B$:}
\begin{align}\label{assumption:A1}
	&B:\R^3 \to \R^3,\nonumber\\ 
	&B_j \in C^1(\R^3), \quad j=1,2,3,\\
	&\divg B = 0\nonumber.
\end{align}

For a given $B$ we denote its radial component by $B_r$. Now let $A$ be any magnetic vector potential with $\rot A = B$ and let $h_A$ be the quadratic form
\begin{align}\label{eq:quad_a_def}
	h_A[u] := \int_{\R^3}|(-i\nabla - A(x))u(x)|^2 \,dx - \frac{1}{4}\int_{\R^3}\frac{|u(x)|^2}{|x|^2}\,dx,
\end{align}
initially defined on the space $C_c^{\infty}(\R^3)$, giving rise to the operator $H_A$ via the Friedrichs extension. Our first result is
\begin{theorem}\label{thm:main_magn_est_1}
	Assume that there exists an $\omega_0 \in \mathbb{S}^2$ and $R>0$ such that  $B_r(R\omega_0)\neq 0$. Then 
	\begin{align*}
		h_A[u] \geq C_1 \int_{\R^3}\frac{|u(x)|^2}{1+|x|^2\log^2\frac{|x|}{R}}\,dx
	\end{align*}
	holds for any $u \in C_c^{\infty}(\R^3)$, with the constant $C_1$ only depending on $B$.
\end{theorem}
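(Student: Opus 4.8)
The plan is to pass to spherical coordinates and exploit the fact that, on each sphere of radius $r$, the tangential part of the magnetic Dirichlet form is bounded below by the lowest eigenvalue of a magnetic Laplacian on $\mathbb{S}^2$, which is strictly positive exactly when the radial field does not vanish on that sphere. First, using the gauge invariance recorded in the Remark, I would replace the given $A$ by the Poincar\'e (radial) gauge $A(x) = -x \times \int_0^1 s\,B(sx)\,ds$, which satisfies $\rot A = B$, is $C^1$ since $B \in C^1$, and obeys $x \cdot A(x) = 0$; the two potentials differ by a gradient, so $h_A$ is unchanged. In this gauge the magnetic gradient splits orthogonally into its radial and tangential parts, giving, for $u \in C_c^\infty(\R^3)$,
\begin{align*}
  \int_{\R^3}|(-i\nabla - A)u|^2\,dx = \int_0^\infty\!\!\int_{\mathbb{S}^2}\Big(|\partial_r u|^2 + \tfrac{1}{r^2}\,|(-i\nabla_{\mathbb{S}^2} - a(r,\cdot))u|^2\Big)\,r^2\,d\omega\,dr,
\end{align*}
where $a(r,\cdot)$ is the tangential potential induced on the sphere of radius $r$.

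Next I would extract the Hardy term by the substitution $u = r^{-1}v$, which turns the radial part into the sharp one-dimensional critical form $\int_0^\infty(|\partial_r v|^2 - \tfrac14 r^{-2}|v|^2)\,dr$ and leaves the tangential part fiberwise. Writing $K(r) := (-i\nabla_{\mathbb{S}^2} - a(r,\cdot))^2 \ge 0$ on $L^2(\mathbb{S}^2)$ and $\lambda_0(r)$ for its lowest eigenvalue, the tangential contribution is at least $\int_0^\infty r^{-2}\lambda_0(r)\,\norm{v(r,\cdot)}_{L^2(\mathbb{S}^2)}^2\,dr$. The conceptual heart of the argument is the claim that $\lambda_0(r) = 0$ if and only if $B_r$ vanishes identically on the sphere of radius $r$: a zero mode is a nowhere-vanishing solution of $(-i\nabla_{\mathbb{S}^2} - a)\psi = 0$, which forces the tangential potential $a$ to be closed; since $\mathbb{S}^2$ is simply connected such an $a$ is exact, so a smooth single-valued zero mode $\psi = e^{i\phi}$ exists if and only if the tangential field, equivalently $B_r$ restricted to that sphere, vanishes identically. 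Because $B_r(R\omega_0)\neq 0$ and $B$ is $C^1$, the field is nonzero near $R\omega_0$, so $\lambda_0(r)>0$ for $r$ in a neighbourhood of $R$; as $\lambda_0$ depends continuously on $r$, there exist $0<R_1<R<R_2$ and $c>0$ with $\lambda_0(r)\ge c$ on $[R_1,R_2]$. Discarding the non-negative tangential term off this interval reduces everything, fiber by fiber in $\omega$, to the one-dimensional inequality
\begin{align*}
  \int_0^\infty\!\Big(|f'|^2 - \tfrac{|f|^2}{4r^2}\Big)dr + c\!\int_{R_1}^{R_2}\!\frac{|f|^2}{r^2}\,dr \ge C_1\!\int_0^\infty\!\frac{|f|^2}{1+r^2\log^2(r/R)}\,dr.
\end{align*}

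To prove this one-dimensional estimate I would first apply the ground-state substitution $f = r^{1/2}g$, which converts the left-hand side into $\int_0^\infty r|g'|^2\,dr + c\int_{R_1}^{R_2} r^{-1}|g|^2\,dr$, and then set $t = \log(r/R)$, so that the derivative term becomes $\int_{\R}|\dot g|^2\,dt$, the bump becomes $c\int_{t_1}^{t_2}|g|^2\,dt$ with $t_1<0<t_2$, and the target weight is bounded by a multiple of $(1+t^2)^{-1}$. It therefore suffices to produce, for the operator $-\frac{d^2}{dt^2} + c\,\mathbf{1}_{[t_1,t_2]}$ on the line, a positive supersolution $\Phi$ with $-\Phi'' + c\,\mathbf{1}_{[t_1,t_2]}\Phi \ge \kappa(1+t^2)^{-1}\Phi$ for some $\kappa>0$; the estimate then follows from the standard ground-state representation. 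Outside $[t_1,t_2]$ I would take $\Phi$ to be the concave square-root profiles $(t-\alpha)^{1/2}$ and $(\beta-t)^{1/2}$, which satisfy the supersolution inequality for small $\kappa$ because $(t-\alpha)^{-2}$ is comparable to $(1+t^2)^{-1}$ there; on $[t_1,t_2]$, where $\Phi$ must pass through a minimum and is convex, I rely on the bump $c\,\mathbf{1}_{[t_1,t_2]}$ to absorb the deficit.

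The main obstacle is that the bump strength $c$ is dictated by the magnetic field and cannot be enlarged, so the supersolution must be engineered to fit: by choosing $\alpha$ sufficiently negative and $\beta$ sufficiently positive the matching slopes at $t_1$ and $t_2$ can be made as small as we like, which keeps the curvature of a positive interpolation on $[t_1,t_2]$ below $c$ and thereby validates the supersolution inequality there, at the harmless cost of shrinking $\kappa$. Tracking the constants through the two substitutions then yields an explicit $C_1 = C_1(B)$, and a density argument extends the inequality from the smooth compactly supported $v$ arising from $u\in C_c^\infty(\R^3)$ to the whole form domain; the rapid decay of the transformed functions as $t\to-\infty$ ensures that the boundary terms in the ground-state representation vanish.
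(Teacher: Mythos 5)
Your proposal is correct in substance, but it takes a genuinely different route from the paper in both of its main steps. For the angular part, the paper never considers the full magnetic Laplacian on $\mathbb{S}^2$: it separates variables a second time along circles of latitude, computes the fiber eigenvalues explicitly as $k-\Phi(r,\theta)$, the flux through spherical caps (Theorem~\ref{thm:tech_magn_est_1}), and then spreads the resulting potential over the whole sphere using the $\theta$-kinetic energy, the diamagnetic inequality and a weighted Poincar\'e lemma (Theorem~\ref{thm:tech_magn_est_2}, Lemma~\ref{lem:low_ev_est}), ending with an explicit $w_2(r)$. You instead work with the lowest eigenvalue $\lambda_0(r)$ of $(-i\nabla_{\mathbb{S}^2}-a(r,\cdot))^2$ and the topological characterization of its vanishing: a zero mode has constant nonzero modulus by the diamagnetic inequality, hence forces $a$ to be closed, hence exact on the simply connected sphere, so $\lambda_0(r)=0$ exactly when $B_r$ vanishes identically on that sphere. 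This is correct, and it isolates cleanly why the method needs $B_r\not\equiv 0$ (matching the paper's own discussion of this limitation); but two facts you use are asserted rather than proved, namely that the bottom of the spectrum is attained (compact resolvent on the compact sphere) and that $r\mapsto\lambda_0(r)$ is continuous (min--max together with uniform convergence of $a(r,\cdot)$, using $A\in C^1$); both are standard but belong in a complete write-up. For the radial part, the paper (Theorem~\ref{thm:tech_magn_est_3}) uses a piecewise linear cutoff and elementary weighted Hardy inequalities, whereas you pass to $t=\log(r/R)$ and construct a positive supersolution $\Phi$ with $-\Phi''+c\,\chi_{[t_1,t_2]}\Phi\ge\kappa(1+t^2)^{-1}\Phi$. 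That strategy does work --- for instance one can take $\Phi(t)=a\cosh\bigl(\sqrt{c/2}\,(t-t_m)\bigr)$ inside the bump, matched to your square-root profiles outside, and verify that the derivative jumps downward at the matching points so that the distributional inequality holds --- but this interpolation is the one step of your argument that is only sketched and genuinely needs to be written out, including the sign condition at the kinks. The trade-off between the two routes: yours is softer and conceptually shorter, and directly identifies the natural positivity condition on each sphere; the paper's chain Theorems~\ref{thm:tech_magn_est_1}--\ref{thm:tech_magn_est_3} is more computational but keeps every constant explicit, which is precisely the advantage the authors claim over the inexplicit bound of \cite{W2}. Your constant $C_1$, produced by continuity and compactness, is not explicit, though it still satisfies the literal requirement of depending only on $B$.
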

\begin{remark}
	A similar result can be obtained in $d=2$, something that was already observed in \cite{K} with an inexplicit constant.
\end{remark}

Denote by
\begin{align*}
	\Phi(r,\theta)= \frac {1}{2\pi}\int_{S(r,\theta)}B_r\,dS
\end{align*}
the flux through the capped sphere $S(r,\theta)$ of opening angle $\theta$ and radius $r$. It is then possible to remove the logarithmic term if the flux does not vanish at infinity.
\begin{theorem}\label{thm:main_magn_est_2}
	If there exist constants $\theta_0, \theta_1 \in (0,\pi)$ and $C_2,C_3 \in (0,1)$, all independent of $r,\theta$ such that $\Phi(r,\theta)$ satisfies the estimate
	\begin{align*}
		C_2 \leq \Phi(r,\theta) \leq C_3,
	\end{align*}
	for $\theta_0 \leq \theta \leq \theta_1$ and $r> r_0$, then 
	\begin{align*}
		h_A[u] \geq C_4\int_{\R^3}\frac{|u(x)|^2}{1+|x|^2}\,dx
	\end{align*}
	holds for any $u \in C_c^{\infty}(\R^3)$, where $C_4$ is a constant depending on $B$.
\end{theorem}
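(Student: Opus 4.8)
The plan is to establish the far-field estimate
\[
h_A[u]\ge c\int_{|x|>r_0}\frac{|u(x)|^2}{1+|x|^2}\,dx
\]
for a constant $c>0$ depending only on $B$, and then to combine it with Theorem~\ref{thm:main_magn_est_1}. The point is that on the bounded region $\{|x|\le r_0\}$ the weight $1+|x|^2\log^2(|x|/R)$ is bounded above by a finite constant $M$ (it has a finite limit at the origin and vanishing logarithm at $|x|=R$), so Theorem~\ref{thm:main_magn_est_1} already yields $h_A[u]\ge (C_1/M)\int_{|x|\le r_0}\frac{|u|^2}{1+|x|^2}\,dx$. Adding the two estimates and taking the smaller constant gives the claim with $C_4=\tfrac12\min(c,\,C_1/M)$.

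For the far-field estimate I would first invoke gauge invariance (the Remark) to pass to the transverse gauge $x\cdot A=0$, admissible since $B\in C^1$; there the radial component of $A$ vanishes. Writing $x=r\omega$ and splitting into radial and tangential parts,
\[
\int_{\R^3}|(-i\nabla-A)u|^2\,dx=\int_0^\infty\!\int_{\mathbb{S}^2}|\partial_r u|^2\,r^2\,d\omega\,dr+\int_0^\infty\!\int_{\mathbb{S}^2}|(-i\nabla_\omega-a_r)u|^2\,d\omega\,dr,
\]
where $a_r$ is the connection induced on the unit sphere by the tangential part of $A$; by Stokes' theorem its circulation around the circle $\{\theta=\mathrm{const}\}$ equals $2\pi\Phi(r,\theta)$. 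The ground-state substitution $u=r^{-1/2}f$ makes the radial kinetic energy cancel the Hardy term exactly (the boundary term at $r=0$ vanishes since $f(0)=0$), leaving
\[
h_A[u]=\int_0^\infty\!\int_{\mathbb{S}^2}r\,|\partial_r f|^2\,d\omega\,dr+\int_0^\infty\frac1r\,\mathcal F_r[f]\,dr,\qquad \mathcal F_r[f]:=\int_{\mathbb{S}^2}|(-i\nabla_\omega-a_r)f|^2\,d\omega.
\]
Discarding the non-negative radial term, everything reduces to a lower bound on the angular form $\mathcal F_r$, uniformly in $r>r_0$.

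The heart of the argument is the spectral estimate $\mathcal F_r[f]\ge c\,\|f\|_{L^2(\mathbb{S}^2)}^2$. Using the diamagnetic inequality in the polar variable, $|(-i\partial_\theta-(a_r)_\theta)f|\ge|\partial_\theta|f||$, together with the one-dimensional Aharonov--Bohm bound on each azimuthal circle, $\int_0^{2\pi}|(-i\partial_\varphi-(a_r)_\varphi)f|^2\,d\varphi\ge\dist(\Phi(r,\theta),\mathbb{Z})^2\int_0^{2\pi}|f|^2\,d\varphi$ (valid even for non-axisymmetric fields, after gauging away the $\varphi$-mean on the circle), I would reduce $\mathcal F_r$ to a weighted one-dimensional form for the profile $P(\theta):=\|f(\theta,\cdot)\|_{L^2(d\varphi)}$ via $\int_0^{2\pi}|\partial_\theta|f||^2\,d\varphi\ge(\partial_\theta P)^2$:
\[
\mathcal F_r[f]\ge\int_0^\pi\Big[(\partial_\theta P)^2+\frac{\dist(\Phi(r,\theta),\mathbb{Z})^2}{\sin^2\theta}\,P^2\Big]\sin\theta\,d\theta.
\]

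The main obstacle is that the flux gap $\dist(\Phi(r,\theta),\mathbb{Z})^2$ is only bounded below on the cone $\theta\in[\theta_0,\theta_1]$ and necessarily degenerates at the poles, so $\mathcal F_r$ cannot be bounded pointwise by $P^2$. The resolution is a one-dimensional Poincar\'e-type inequality that transports the gap along $\theta$: the hypothesis gives $\dist(\Phi(r,\theta),\mathbb{Z})^2\ge\delta^2:=\min(C_2,1-C_3)^2$ uniformly for $\theta\in[\theta_0,\theta_1]$, $r>r_0$, so the right-hand side dominates the quadratic form of the regular Sturm--Liouville operator $-\frac{1}{\sin\theta}\partial_\theta(\sin\theta\,\partial_\theta)+\delta^2\mathbf{1}_{[\theta_0,\theta_1]}$. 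This is the Legendre operator perturbed by a non-negative potential that is positive on a set of positive measure; its lowest eigenvalue $\lambda_0$ is strictly positive, since the only zero mode of the Legendre operator is the constant, which the potential rules out, and $\lambda_0$ is manifestly independent of $r$. Hence $\mathcal F_r[f]\ge\lambda_0\|f\|_{L^2(\mathbb{S}^2)}^2$. Converting back through $\|f(r,\cdot)\|_{L^2(\mathbb{S}^2)}^2=r\int_{\mathbb{S}^2}|u|^2\,d\omega$ and using $r^2/(1+r^2)\le1$ yields the far-field estimate with $c=\lambda_0$, completing the proof.
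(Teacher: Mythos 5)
Your proof is correct, but it assembles the pieces in a genuinely different way from the paper. The paper first converts the flux hypothesis into the radial potential bound $w_2(r)\ge C_5/r^2$ for $r>r_0$ (via Theorems~\ref{thm:tech_magn_est_1} and \ref{thm:tech_magn_est_2}) and then invokes Theorem~\ref{thm:tech_magn_est_4} with $I_j=(r_0+j,r_0+j+1)$, $L_j=C_5/(r_0+j)^2$: an infinite family of radial cutoffs transports this positivity across the gaps and, crucially, into the hole $\{|x|\le r_0\}$, keeping every constant explicit. You instead observe that for $r>r_0$ no radial kinetic energy is needed at all, since the angular estimate alone already yields a potential $\ge\lambda_0/r^2\ge\lambda_0/(1+r^2)$ there, and that the hole can be filled by quoting Theorem~\ref{thm:main_magn_est_1}, whose logarithmic weight is bounded on the compact set $\{|x|\le r_0\}$; adding the two bounds costs only a factor $1/2$. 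This bypasses Theorem~\ref{thm:tech_magn_est_4} entirely and makes transparent that radial kinetic energy is only needed over a compact region. Two small points should be stated to make it complete: (i) the hypothesis of Theorem~\ref{thm:main_magn_est_1} does hold here, because $\Phi(r,\theta_1)\ge C_2>0$ for some $r>r_0$ forces $B_r\neq 0$ somewhere on that cap; (ii) your positivity claim $\lambda_0>0$ for the Sturm--Liouville operator rests on a soft compactness/unique-continuation argument, which silently uses the compact embedding of $H^1((0,\pi);\sin\theta\,d\theta)$ into $L^2((0,\pi);\sin\theta\,d\theta)$ to ensure the bottom of the spectrum is attained. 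The paper's Lemma~\ref{lem:low_ev_est} gives exactly this spectral gap with the explicit constant \eqref{def:lambda_const}, so you could cite it in place of the soft argument and thereby recover an explicit $C_4$, in keeping with the paper's stated goal of explicit constants. Underneath, the building blocks coincide (the circle spectral analysis with eigenvalues $k-\Phi(r,\theta)$, and the transport of positivity in $\theta$ across the sphere); your profile reduction $P(\theta)=\lVert f(\theta,\cdot)\rVert_{L^2(d\phi)}$ is an equivalent variant of the paper's device of applying Lemma~\ref{lem:low_ev_est} to $|u|$ at fixed $\phi$.
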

\begin{remark}
	Theorem~\ref{thm:main_magn_est_1} will typically apply to fields with compact support, whereas for Theorem~\ref{thm:main_magn_est_2} one would in some sense need a behaviour of $B_r \sim r^{-2}$ at infinity.
\end{remark}

\subsection{Comparison to Existing Results}
We would like to compare our result to some existing bounds. We begin with the classical commutator estimate proved in \cite{AHS},
\begin{align}\label{eq:comm_est_ineq}
	(u,(-i\nabla - A)^2u) \geq |(B_{jk}u,u)|,
\end{align}
where $B_{jk} = \partial_j A_k - \partial_k A_j$ with $j,k=1,2,3$. A variant of the above is the result given in \cite{ET}, where the authors obtain a bound for $h_A$, yet their potential is not necessarily non-negative. The major problem here is that in the general case the quantities $B_{jk}$ can change sign, leading to possible cancellations and hence the positivity of the kinetic energy operator remains veiled. The result of \cite{BLS} remedies the above in a certain sense, namely the authors obtain a bound of the form
\begin{align*}
	\int_{\R^3}|(-i\nabla - A(x))u(x)|^2 \,dx \geq C \int_{\R^3}|B(x)||u(x)|^2\,dx,
\end{align*}
under some regularity assumptions on $B$, assuming that $B$ never vanishes and does not change direction too quickly. Our result has the advantage that it incorporates \eqref{eq:magn_hardy_ineq} and reduces to the standard Hardy inequality in the case of $B = 0$. Its application is best suited to situations where the essential spectrum of $H_A$ is $[0,\infty)$ and there is no gap in the spectrum. This is for example the case if one considers magnetic fields $B$ which go to zero at infinity, see \cite{L}. 

Our method does not give an extra contribution to $h_A$ for magnetic fields that live on the sphere, i.e. fields for which
\begin{align*}
	B_r \equiv 0.
\end{align*}
For these fields the flux through any spherical cap is always zero, hence $w$ vanishes identically. This is a flaw stemming purely from our method, since we know from \cite{W2} that an improvement is possible for any non trivial $B$. Note that these fields on the sphere have to vanish at some point (due to the continuity assumption), since one "can not comb the hairs on a sphere". If we assume however that the field never vanishes, it is the same theorem that tells us that $B$ has to point in the radial direction somewhere if it is to be continuous and we will obtain an extra contribution.
\begin{remark}
	A solution is to choose a different origin $y$ for the spherical coordinates so that the radial component no longer vanishes identically. 
\end{remark}

\section{Magnetic Estimates}
\subsection{Preliminaries}
If $B$ satisfies \eqref{assumption:A1}, then it is always possible to construct a sufficiently regular magnetic vector potential $A$ with
\begin{align}\label{eq:multipolar_gauge}
	x \cdot A(x) = 0
\end{align} 
and $\rot A = B$ through a method stated in \cite{Y},
\begin{align*}
	A(x) = \int_0^1 (B(xs) \times xs)\,ds.
\end{align*}
The reason for choosing this gauge is that in polar coordinates the radial part alone already compensates for the classical Hardy term. When changing to spherical coordinates $x = (r,\omega)$, the quadratic form $h_A$ turns into
\begin{align*}
	h_A[u] = \int_0^\infty \int_{\mathbb{S}^2} \left[|\partial_r u|^2 + \frac{1}{r^2}\left|(-i\nabla_{\omega}-rA_{\omega})u\right|^2 - \frac{|u|^2}{4r^2}\right]\, r^2d\omega dr,
\end{align*}
$d\omega$ being the surface measure on $\mathbb{S}^2$.

For $u \in C_c^{\infty}(\R^3)$, let
\begin{align*}
	l[u] := \int_0^\infty \int_{\mathbb{S}^2} |\partial_r (\sqrt{r}u)|^2\, rd\omega dr.
\end{align*}
With a ground-state substitution we obtain 
\begin{align*}
	h_A[u] = l[u] + \int_0^\infty g_A[u]\,dr,
\end{align*}
where $g_A[u]$, for fixed $r>0$, is given by
\begin{align*}
	g_A[u] := \int_{\mathbb{S}^2} \left|(-i\nabla_{\omega}-rA_{\omega})u\right|^2\, d\omega.
\end{align*}
It is now clear that the radial part of the Schr\"odinger operator $H_A$ behaves like the radial part of the two-dimensional Laplace operator, explaining the appearance of the logarithmic term in Theorem~\ref{thm:main_magn_est_1}.

Next, we introduce the standard spherical coordinates on $\mathbb{S}^2$
\begin{align*}
	x_1 &= r \cos\phi \sin\theta\\
	x_2 &= r \sin\phi \sin \theta\\
	x_3 &= r \cos \theta,
\end{align*}
where $(r,\theta,\phi) \in [0,\infty)\times[0,\pi)\times[0,2\pi)$. In these coordinates,
\begin{align*}
	g_A[u] = \int_0^{2\pi}\int_0^{\pi}\left[\left|(-i\partial_{\theta} - rA_{\theta})u\right|^2 + \left|\left(\frac{-i \partial_\phi}{\sin \theta} - r A_{\phi}\right) u\right|^2\right]\,\sin\theta d
	\theta d\phi,
\end{align*}
where $A_{\theta} := A \cdot e_{\theta}$ and $A_{\phi}:=A \cdot e_{\phi}$. Here $e_{\theta}$ and $e_{\phi}$ are the standard unit vectors given the above spherical coordinates.

\begin{theorem}\label{thm:tech_magn_est_1}
	For $u \in C_c^{\infty}(\R^3)$, the inequality
	\begin{align*}
		 \int_0^{2\pi}\int_0^{\pi}\left|\left(\frac{-i \partial_\phi}{\sin \theta} - r A_{\phi}\right) u\right|^2\,\sin\theta d\theta d\phi \geq  \int_0^{2\pi}\int_0^{\pi} w_1(r,\theta) |u|^2\,\sin\theta d\theta d\phi
	\end{align*}
	holds, where $w_1(r,\theta)$ is the non-negative potential
	\begin{align*}
		w_1(r,\theta) := \frac{\min_{k \in \mathbb{Z}}\left(k-\Phi(r,\theta)\right)^2}{\sin^{2}\theta}.
	\end{align*}
\end{theorem}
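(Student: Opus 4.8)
The plan is to view the left-hand side, for each fixed $r>0$ and polar angle $\theta$, as a one-dimensional magnetic quadratic form on the circle parametrised by $\phi \in [0,2\pi)$, and to prove a sharp Aharonov--Bohm type lower bound on each such circle. First I would pull the factor $\sin^{-1}\theta$ out of the magnetic derivative, writing
\begin{align*}
	\left|\left(\frac{-i\partial_\phi}{\sin\theta}-rA_\phi\right)u\right|^2 = \frac{1}{\sin^2\theta}\left|\left(-i\partial_\phi - r\sin\theta\, A_\phi\right)u\right|^2,
\end{align*}
so that, using $\sin\theta\,d\theta\,d\phi$ as the measure, the whole expression becomes
\begin{align*}
	\int_0^\pi \frac{1}{\sin\theta}\left[\int_0^{2\pi}\left|\left(-i\partial_\phi - a(\phi)\right)u\right|^2 d\phi\right] d\theta, \qquad a(\phi) := r\sin\theta\, A_\phi.
\end{align*}
It then suffices to bound the inner integral for each fixed $\theta$.

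Second, I would establish the circle inequality: for a $2\pi$-periodic potential $a$ with mean $\alpha := \frac{1}{2\pi}\int_0^{2\pi} a\,d\phi$ one has
\begin{align*}
	\int_0^{2\pi}|(-i\partial_\phi - a)v|^2\,d\phi \geq \min_{k\in\mathbb{Z}}(k-\alpha)^2 \int_0^{2\pi}|v|^2\,d\phi.
\end{align*}
To prove this I would gauge away the non-constant part of $a$: set $\psi(\phi) = \int_0^\phi (a(s)-\alpha)\,ds$, note $\psi(2\pi)=\psi(0)=0$ so that $e^{i\psi}$ is single-valued on the circle, and substitute $v = e^{i\psi}w$. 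Multiplication by $e^{i\psi}$ is unitary on $L^2$ and converts $-i\partial_\phi - a$ into $-i\partial_\phi - \alpha$. Expanding $w$ in its Fourier series $w = \sum_k c_k e^{ik\phi}$ turns the form into $2\pi\sum_k (k-\alpha)^2|c_k|^2$, which is bounded below by $\min_k (k-\alpha)^2$ times $\int|w|^2 = \int|v|^2$ by Parseval.

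Third, I would identify the mean $\alpha$ of $a(\phi) = r\sin\theta\,A_\phi$ with the flux $\Phi(r,\theta)$. The boundary of the spherical cap $S(r,\theta)$ is the latitude circle of Euclidean radius $r\sin\theta$ whose line element is $r\sin\theta\,d\phi\,e_\phi$; Stokes' theorem applied on the cap gives
\begin{align*}
	\int_{S(r,\theta)} B_r\,dS = \oint_{\partial S(r,\theta)} A\cdot dl = r\sin\theta \int_0^{2\pi} A_\phi\,d\phi = \int_0^{2\pi} a(\phi)\,d\phi,
\end{align*}
hence $\alpha = \frac{1}{2\pi}\int_{S(r,\theta)}B_r\,dS = \Phi(r,\theta)$. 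Substituting this back and reassembling the $\theta$- and $\phi$-integrals against the weight $\sin^{-2}\theta$ produces exactly $w_1(r,\theta) = \min_{k\in\mathbb{Z}}(k-\Phi)^2/\sin^2\theta$, which is manifestly non-negative.

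I expect the main obstacle to be the bookkeeping around the circle gauge transformation and the flux identification rather than the Fourier estimate itself. Two points deserve care. The substitution $v=e^{i\psi}w$ is admissible only because subtracting the mean $\alpha$ makes $\psi$ genuinely $2\pi$-periodic, so the gauge is single-valued and the argument is legitimately a closed-loop (Aharonov--Bohm) phenomenon. The application of Stokes' theorem requires $A$ to be regular on the cap, which is guaranteed since the cap sits at fixed radius $r>0$ away from the origin and the gauge \eqref{eq:multipolar_gauge} yields a continuous $A$ for $B\in C^1$. One should also check that the degeneration of the circle at the poles $\theta\in\{0,\pi\}$ causes no trouble: there $\Phi(r,\theta)\to 0$ fast enough that $w_1$ stays bounded, and the measure $\sin\theta\,d\theta$ suppresses the endpoints.
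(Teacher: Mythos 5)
Your proposal is correct and follows essentially the same route as the paper: a fiberwise reduction to the circle operator $-i\partial_\phi - r\sin\theta\,A_\phi$, a lower bound by $\min_{k\in\mathbb{Z}}(k-\alpha)^2$ where $\alpha$ is the mean of the potential, and identification of $\alpha$ with the flux $\Phi(r,\theta)$. Your gauge transformation $v=e^{i\psi}w$ plus Fourier expansion is exactly the content of the explicit eigenfunctions $\Pi_k$ the paper imports from Laptev--Weidl, and your appeal to Stokes' theorem is the paper's direct computation of $\int_{S(r,\theta)}(\rot A)_r\,dS$ in spherical coordinates.
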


For a given $r>0$, one readily observes that if the flux of the radial component of the magnetic field vanishes through one of the capped spheres with opening angle $\theta$, then $w_1(r,\theta)$ will also vanish at this point. Hence $w_1$ does not serve as an optimal candidate for a lower bound on the quadratic form $h_A$. This can be remedied with the following theorem, where we show that as long as the flux is positive through some of the spherical caps, then the potential is positive on the whole sphere.
\begin{theorem}\label{thm:tech_magn_est_2}
	Assume that $M(r) \leq w_1(r,\theta)$, for $0 < \theta_0(r) \leq \theta \leq \theta_1(r) <\pi$ and $r>0$, then the inequality
	\begin{align*}
		g_A[u]\geq \int_0^{2\pi}\int_0^{\pi}w_2(r)|u(r,\theta,\phi)|^2\,r^2\sin\theta d\theta d\phi
	\end{align*}
	holds for any $u \in C_c^{\infty}(\R^3)$, where $w_2$ is (in spherical coordinates) given by 
	\begin{align*}
		w_2(r) = \frac{\lambda(M(r),\theta_0(r),\theta_1(r))}{r^2}.
	\end{align*}
	For the definition of the function $\lambda$ see \eqref{def:lambda_const}.
\end{theorem}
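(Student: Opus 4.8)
The plan is to reduce the two-dimensional problem on the sphere to a one-parameter family of one-dimensional variational problems, and then to identify $\lambda$ with the ground-state energy of a Sturm--Liouville operator in the variable $\theta$. First I would peel off the azimuthal part of $g_A$ by invoking Theorem~\ref{thm:tech_magn_est_1}, which gives
\begin{align*}
g_A[u] \geq \int_0^{2\pi}\int_0^{\pi}\left[\left|(-i\partial_{\theta} - rA_{\theta})u\right|^2 + w_1(r,\theta)|u|^2\right]\sin\theta\, d\theta\, d\phi.
\end{align*}
The surviving magnetic derivative now acts in the single variable $\theta$, so the one-dimensional diamagnetic inequality $|(-i\partial_\theta - rA_\theta)u| \geq |\partial_\theta |u||$ lets me discard the vector potential at the price of replacing $u$ by $|u|$. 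Because $r$ is held fixed and both $w_1$ and the measure are independent of $\phi$, Fubini reduces the estimate to the one-dimensional inequality
\begin{align*}
\int_0^{\pi}\left[|v'(\theta)|^2 + w_1(r,\theta)|v(\theta)|^2\right]\sin\theta\, d\theta \geq \lambda \int_0^{\pi}|v(\theta)|^2\sin\theta\, d\theta,
\end{align*}
applied with $v(\theta)=|u(r,\theta,\phi)|$ for each fixed $\phi$; one then integrates in $\phi$ and inserts the trivial factors $r^2$ to match the measure stated in the theorem.

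Next I would exploit the hypothesis $M(r)\leq w_1(r,\theta)$ on $[\theta_0(r),\theta_1(r)]$ together with the non-negativity of $w_1$ to bound $w_1(r,\theta)\geq M(r)\,\mathbf{1}_{[\theta_0,\theta_1]}(\theta)$, so that it suffices to handle the step potential. The substitution $t=\cos\theta$ turns $\sin\theta\, d\theta$ into $dt$ and the Dirichlet term into $(1-t^2)|v'(t)|^2$, recasting the problem as the spectral bottom of the Legendre-type operator
\begin{align*}
L = -\frac{d}{dt}\left((1-t^2)\frac{d}{dt}\right) + M\,\mathbf{1}_{[\cos\theta_1,\cos\theta_0]}
\end{align*}
on $L^2((-1,1),dt)$ with the natural boundedness conditions at $t=\pm1$. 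I would then \emph{define} $\lambda=\lambda(M,\theta_0,\theta_1)$ to be the bottom of the spectrum of $L$, so that the desired inequality is exactly its variational characterization. Since the pure Legendre operator has discrete spectrum with bottom eigenvalue $0$ realized by the constant, adding a potential that is positive on a set of positive measure raises this bottom strictly, giving $\lambda>0$; and testing against the constant shows $\lambda \leq \tfrac{1}{2}M(\cos\theta_0-\cos\theta_1) < M$, so in particular $\lambda < M$ always.

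The main obstacle is making $\lambda$ explicit, i.e. producing the defining relation \eqref{def:lambda_const}. The ground state $\psi$ of $L$ is positive and, on the three subintervals $(-1,\cos\theta_1)$, $(\cos\theta_1,\cos\theta_0)$ and $(\cos\theta_0,1)$, solves Legendre's equation with parameters $\nu(\nu+1)=\lambda$ on the two outer pieces and $\mu(\mu+1)=\lambda-M<0$ on the middle piece, the latter forcing conical (Mehler) functions. Selecting on each outer interval the solution that stays bounded at the endpoint $t=\pm1$ and imposing continuity of $\psi$ and of $(1-t^2)\psi'$ at the two interior joints yields a transcendental matching condition whose lowest root is $\lambda$. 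The delicate points are the bookkeeping with these special functions and the verification that the matched solution is nodeless, so that the root one computes is genuinely the ground-state energy and not an excited level; this special-function analysis is where the real work lies, whereas the reduction steps above are essentially soft.
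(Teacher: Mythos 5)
Your soft reduction is exactly the paper's: keep the $\theta$-kinetic term, bound the azimuthal term below by $w_1$ via Theorem~\ref{thm:tech_magn_est_1}, remove the remaining vector potential with the diamagnetic inequality $\left|(-i\partial_\theta - rA_\theta)u\right| \geq \left|\partial_\theta |u|\right|$, and reduce by Fubini to a one-dimensional inequality on $(0,\pi)$ with weight $\sin\theta$ and the step potential $M\mathbf{1}_{[\theta_0,\theta_1]}$. Up to that point there is nothing to object to.

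The gap is in the core step. The theorem does not assert the inequality with \emph{some} constant; it asserts it with the \emph{specific explicit} constant of \eqref{def:lambda_const},
\begin{align*}
	\lambda(M,\theta_0,\theta_1) = \frac{M}{2 + 4k_1((\theta_0+\theta_1)/2)M + 4k_1((\theta_0+\theta_1)/2)k_2(\theta_0,\theta_1)},
\end{align*}
where $k_1$ is the weighted Poincar\'e constant \eqref{def:k_1_const} and $k_2 = 2/(\theta_1 - \theta_0)$. You instead \emph{define} $\lambda$ as the spectral bottom of a Legendre-type operator with the step potential. That proves the inequality with the (unknown, sharp) constant, but it proves the theorem only if you additionally show that the expression \eqref{def:lambda_const} is a lower bound for that spectral bottom --- and this is precisely the content of the paper's Lemma~\ref{lem:low_ev_est}, i.e.\ the entire substance of the proof, which your proposal defers. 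Worse, the route you sketch for ``making $\lambda$ explicit'' --- matching Legendre and conical (Mehler) functions at $\cos\theta_0$, $\cos\theta_1$ and extracting the lowest root of the transcendental condition --- cannot possibly produce \eqref{def:lambda_const}: that formula is an elementary rational function of $M,\theta_0,\theta_1$ and is \emph{not} the exact ground-state energy (it is merely a lower bound for it, as your own test-function estimate $\lambda \leq \tfrac{1}{2}M(\cos\theta_0 - \cos\theta_1)$ already suggests), so no exact spectral computation will land on it. The paper avoids spectral theory entirely: with $c = (\theta_0+\theta_1)/2$ it takes the piecewise-linear cutoff $\zeta$ equal to $2|\theta - c|/(\theta_1-\theta_0)$ on $(\theta_0,\theta_1)$ and $1$ outside, splits
\begin{align*}
	\frac{1}{2}\int_0^{\pi}|v|^2\sin\theta\, d\theta \leq \int_0^{\pi}|\zeta v|^2\sin\theta\, d\theta + \int_0^{\pi}|(1-\zeta)v|^2\sin\theta\, d\theta,
\end{align*}
applies the weighted Poincar\'e inequality of Lemma~\ref{lem:poincare_type_ineq} to $\zeta v$ (which vanishes at $c$), and absorbs the $(1-\zeta)v$ term using $V \geq M$ on $(\theta_0,\theta_1)$; collecting constants gives exactly \eqref{def:lambda_const}. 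To repair your argument you must either prove such an explicit lower bound on your spectral bottom (which amounts to reproving that lemma) or run the cutoff argument directly; the special-function machinery does not substitute for it.
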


If the radial component $B_r$ should vanish identically on a sphere, then $M(r) = 0$, and $\lambda(M(r),\theta_0(r),\theta_1(r))=0$ ($\theta_0(r)$, $\theta_1(r)$ can be arbitrary). It is then still possible to obtain a strictly positive radial potential if the potential is positive on some spherical shells. By the continuity of the magnetic field $B$ there might exist intervals $I_j = (\alpha_j,\beta_j)$ and constants $L_j > 0$ so that $w_2(r) \geq \sum_{j=0}^{N}L_j\chi_{I_j}(r)$, $N$ being finite or inifite. Let $m_j$ be the midpoints of these intervals, defined as
\begin{align*}
	m_j = \left\{ \begin{array}{ll}
			\frac{\beta_j+\alpha_j}{2} & j \geq 0\\
			0 & j=-1.
			\end{array} \right.
\end{align*}
\begin{theorem}\label{thm:tech_magn_est_3}
	If $N>0$, then
	\begin{align*}
		h_A[u] \geq D_1 \int_{\R^3}\frac{|u(x)|^2}{1+|x|^2\log^2\left(\frac{2|x|}{\alpha_j+\beta_j}\right)}\,dx,
	\end{align*}
	$D_1$ depending on $B$.
\end{theorem}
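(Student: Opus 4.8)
The plan is to peel off one magnetic bump via Theorem~\ref{thm:tech_magn_est_2}, reduce everything to a one-dimensional inequality in the radial variable, and then prove that inequality by a logarithmic change of variables that turns the radial part into a flat Dirichlet form on the whole line. Recall from the preliminaries that the ground-state substitution gives the exact splitting $h_A[u]=l[u]+\int_0^\infty g_A[u]\,dr$. By Theorem~\ref{thm:tech_magn_est_2} and the hypothesis $w_2(r)\ge\sum_{j}L_j\chi_{I_j}(r)$, I would first discard all bumps except a single fixed index $j$ to obtain
\[
h_A[u]\ge l[u]+L_j\int_{\alpha_j}^{\beta_j}\!\int_{\mathbb{S}^2}|u|^2\,r^2\,d\omega\,dr .
\]
Setting $v=\sqrt{r}\,u$, so that $|\partial_r(\sqrt{r}u)|^2=|\partial_r v|^2$ and $|u|^2r^2=|v|^2r$, every term is an angular integral of a radial expression with the ``two-dimensional'' measure $r\,dr$. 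Applying the forthcoming scalar inequality slicewise (to $\Re v$ and $\Im v$ for each fixed $\omega$) and integrating in $\omega$, it suffices to prove, for $f\in C_c^\infty(0,\infty)$ and $m_j=(\alpha_j+\beta_j)/2$,
\[
\int_0^\infty |f'|^2\,r\,dr + L_j\int_{\alpha_j}^{\beta_j}|f|^2\,r\,dr
\;\ge\; D\int_0^\infty \frac{|f|^2}{1+r^2\log^2(r/m_j)}\,r\,dr ,
\]
with $D$ depending only on $L_j,\alpha_j,\beta_j$.

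\textbf{Flattening the radial form.} The substitution $r=m_j e^{t}$, $g(t)=f(m_je^{t})$ sends $r\,dr$ to $m_j^2e^{2t}\,dt$ and, since $\partial_t g=r\,\partial_r f$, flattens the Dirichlet integral: $\int_0^\infty|f'|^2\,r\,dr=\int_{\R}|g'|^2\,dt$. The bump becomes $L_j\int_a^b|g|^2\,m_j^2e^{2t}\,dt\ge L'\int_a^b|g|^2\,dt$ on the bounded interval $[a,b]=[\log(\alpha_j/m_j),\log(\beta_j/m_j)]$, which contains $0$. The target weight transforms into $\rho(t)=\frac{m_j^2e^{2t}}{1+m_j^2e^{2t}t^2}$, for which I would record the two pointwise bounds $\rho(t)\le t^{-2}$ and $\rho(t)\le m_j^2e^{2t}$, giving $\rho(t)\le C(1+t^2)^{-1}$ with an explicit $C$. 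Hence it remains to establish the whole-line inequality
\[
\int_{\R}|g'|^2\,dt + L'\int_a^b|g|^2\,dt \;\ge\; D'\int_{\R}\frac{|g|^2}{1+t^2}\,dt .
\]

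\textbf{The whole-line inequality (main obstacle).} This is the genuinely delicate step, because the Dirichlet form alone cannot control $\int\frac{g^2}{1+t^2}\,dt$ (constants are counterexamples): it is precisely the bump near $t=0$ that removes this degeneracy, and the difficulty is to do so with a constant \emph{independent of the support} of $g$. I would split $\R$ into $(-\infty,a]$, $[a,b]$, $[b,\infty)$. On $[a,b]$ the factor $(1+t^2)^{-1}\le 1$ lets the bump absorb the weight directly. On $[b,\infty)$ I would write $g(t)=g(b)+\int_b^t g'$; the contribution of $g(b)^2$ is at most $\pi\,g(b)^2$, while the remaining term is handled by the one-sided Hardy inequality $\int_b^\infty\frac{1}{1+t^2}\big(\int_b^t g'\big)^2\,dt\le C\int_b^\infty|g'|^2\,dt$, whose validity follows from the Muckenhoupt condition $\sup_{s>b}\big(\int_s^\infty\frac{dt}{1+t^2}\big)(s-b)<\infty$ with an explicit constant. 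A naive Cauchy--Schwarz bound would instead produce the divergent integral $\int_b^\infty\frac{t}{1+t^2}\,dt$, which is exactly why the weighted Hardy inequality is needed. The endpoint value is controlled by the bump and the energy through the averaged estimate $g(b)^2\le\frac{2}{b-a}\int_a^b g^2+2(b-a)\int_a^b|g'|^2$, and the half-line $(-\infty,a]$ is treated symmetrically (there the weight is even more favorable). Collecting the three pieces yields $D'$ explicitly in terms of $a,b,L'$.

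\textbf{Reassembly.} Finally I would combine $\rho(t)\le C(1+t^2)^{-1}$ with the whole-line inequality to recover the one-dimensional claim with $D=D'/C$, undo the substitution $r=m_je^{t}$, integrate back over $\mathbb{S}^2$, and thereby obtain Theorem~\ref{thm:tech_magn_est_3} with a constant $D_1$ built from $L_j,\alpha_j,\beta_j$, hence depending only on $B$ through the shell data supplied by Theorem~\ref{thm:tech_magn_est_2}. The freedom to anchor the logarithm at any midpoint $m_j$ is exactly what the statement records; the convention $m_{-1}=0$ lets one instead center at the origin when feeding this estimate into the proof of Theorem~\ref{thm:main_magn_est_1}.
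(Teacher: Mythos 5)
Your proposal is correct, and while its reduction to a one-dimensional radial inequality is the same as the paper's (keep $l[u]$, apply Theorem~\ref{thm:tech_magn_est_2} to lower-bound the angular part by the bump $L_j\chi_{I_j}$, substitute $v=\sqrt{r}\,u$), your proof of that one-dimensional inequality takes a genuinely different route. The paper stays in the variable $r$: it introduces a piecewise-linear cutoff $\eta$ vanishing at the midpoint $m_j$, writes $v=\eta v+(1-\eta)v$, applies a weighted Poincar\'e inequality on $(0,m_j)$ and a logarithmic Hardy inequality on $(m_j,\infty)$ to $\eta v$ (both legitimate precisely because $\eta v$ vanishes at $m_j$), and absorbs the cutoff error $|\eta' v|^2$ together with the $(1-\eta)v$ piece into the bump, ending with the explicit constant $D_1=L_j/(4n_1L_j+4n_1n_2+2)$. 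You instead flatten the form via $r=m_je^{t}$, dominate the transformed weight by $C(1+t^2)^{-1}$, and prove a whole-line inequality by splitting at the bump endpoints, one-sided Muckenhoupt--Hardy inequalities on the two half-lines, and an averaging bound for the endpoint values $g(a),g(b)$. Your route makes the structure transparent---the paper's log-Hardy step is exactly the classical half-line Hardy inequality seen through your substitution, and the weight $1/(1+r^2\log^2(r/m_j))$ is unmasked as the two-dimensional virtual-level weight $1/(1+t^2)$---and your endpoint averaging plays the role of the paper's cutoff trick, since forcing $\eta v$ to vanish at $m_j$ serves the same purpose as your control of the boundary values. What the paper's version buys is brevity: it needs only two elementary integration-by-parts estimates rather than the Muckenhoupt criterion. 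Both arguments produce a constant depending only on $L_j,\alpha_j,\beta_j$, hence on $B$; your Hardy constants are indeed uniform because $a<0<b$ by the definition of $m_j$ (though uniformity is not even required, as $D'$ may depend on the shell data), and the small technical point that $v=\sqrt{r}\,u$ is not in $C_c^\infty(0,\infty)$ (it vanishes only like $\sqrt{r}$ at the origin) is shared with, and as harmless as in, the paper.
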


\begin{theorem}\label{thm:tech_magn_est_4}
	If there exists constants $D_2,D_3,D_4$, depending only on $B$, such that for all $0\leq j \leq N$
	\begin{align}\label{thm:main_magn_est_4_cond}
		\frac{1}{L_j(1+\dist(0,I_j)^2)} &\leq D_2\nonumber\\
		\frac{m_j-m_{j-1}}{1+\dist(0,I_j)^2} &\leq D_3\\
		\max\left\{\frac{m_j - m_{j-1}}{1+\dist(0,I_j)^2}, \frac{m_{j+1} - m_{j}}{1+\dist(0,I_{j+1})^2}\right\} &\leq D_4 L_j |I_j|^2\nonumber,
	\end{align}
	then $u \in C_c^{\infty}(\R^3)$ satisfies
	\begin{align*}
		h_A[u] \geq D_5 \int_{\R^3}\frac{|u(x)|^2}{1+|x|^2}\,dx.
	\end{align*}
	The constant $D_5$ depends on $B$ only.
\end{theorem}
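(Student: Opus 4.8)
The plan is to reduce the three-dimensional statement to a single weighted one-dimensional Hardy inequality and to read the three hypotheses in \eqref{thm:main_magn_est_4_cond} as the balance conditions that let this inequality close up \emph{without} the logarithmic loss present in Theorem~\ref{thm:tech_magn_est_3}. First I would pass to the radial profile. Writing $v=\sqrt{r}\,u$ and
\begin{align*}
G(r):=\Big(\int_{\mathbb{S}^2}|v(r,\omega)|^2\,d\omega\Big)^{1/2}=\Big(r\int_{\mathbb{S}^2}|u(r,\omega)|^2\,d\omega\Big)^{1/2},
\end{align*}
the pointwise estimate $|G'(r)|\le\big(\int_{\mathbb{S}^2}|\partial_r v|^2\,d\omega\big)^{1/2}$ yields $l[u]\ge\int_0^\infty|G'(r)|^2\,r\,dr$. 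Inserting this together with Theorem~\ref{thm:tech_magn_est_2} and the hypothesis $w_2\ge\sum_{j=0}^N L_j\chi_{I_j}$ into the splitting $h_A[u]=l[u]+\int_0^\infty g_A[u]\,dr$, and expressing the target weight in the same variable, reduces the theorem to showing
\begin{align*}
\int_0^\infty|G'|^2\,r\,dr+\sum_{j=0}^N L_j\int_{I_j}G^2\,r\,dr\ \ge\ D_5\int_0^\infty\frac{G^2\,r}{1+r^2}\,dr
\end{align*}
for every $G$ arising in this way, with $D_5$ depending only on $D_2,D_3,D_4$. Here $G$ is compactly supported with $G(0)=0$.

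For the one-dimensional inequality I would split the target integral over the wells $I_j$ and over the complementary gaps, using the midpoints $m_j$ (with $m_{-1}=0$) to organise the latter into blocks $J_j=(m_{j-1},m_j)$ anchored to the well $I_j$ at the right endpoint. Over each well the first hypothesis is exactly what is needed: since $r\ge\dist(0,I_j)$ on $I_j$,
\begin{align*}
\int_{I_j}\frac{G^2\,r}{1+r^2}\,dr\le\frac{1}{1+\dist(0,I_j)^2}\int_{I_j}G^2\,r\,dr\le D_2\,L_j\int_{I_j}G^2\,r\,dr,
\end{align*}
so summing shows the weight over $\bigcup_j I_j$ is already dominated by the potential energy. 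On each block I would compare $G$ to the well $I_j$ through the telescoping identity $G(r)=G(\rho)+\int_\rho^r G'\,ds$ with $\rho\in I_j$; Cauchy--Schwarz gives $G(r)^2\le 2\bar G_j^2+2\big(\int_\rho^r G'\,ds\big)^2$ with $\bar G_j^2:=|I_j|^{-1}\int_{I_j}G^2\,ds$. Integrating against $r/(1+r^2)$ over $J_j$ produces an anchor contribution in $\bar G_j^2$ and a kinetic cross contribution; the second and third hypotheses in \eqref{thm:main_magn_est_4_cond} are precisely the balance between the block length $m_j-m_{j-1}$, the distance $\dist(0,I_j)$, and the well strength $L_j|I_j|^2$ that allows these to be absorbed into the potential energy of $I_j$ and the weighted kinetic energy on $J_j$ with constants independent of $j$. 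Summing over $j$, with each well charged to only boundedly many blocks, then yields the claimed inequality.

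The main obstacle is to carry out this absorption with a \emph{single} constant across all scales. The weights $r$ and $1+r^2$ are genuinely non-constant over a block --- the second hypothesis only bounds $m_j-m_{j-1}$ by a multiple of $1+\dist(0,I_j)^2$, which for distant wells does not force $m_j$ and $m_{j-1}$ to be comparable --- so the variation of the weight across $J_j$ must be tracked rather than frozen, and the constants produced in the near-origin regime ($1+r^2\approx1$) must be reconciled with those in the far regime ($1+r^2\approx r^2$). This is exactly the step at which a naive weighted Cauchy--Schwarz bound on the cross term reintroduces a factor $\log(r/\rho)$ and degrades the conclusion to the logarithmic estimate of Theorem~\ref{thm:tech_magn_est_3}; the whole point of the hypotheses \eqref{thm:main_magn_est_4_cond} is to organise the telescoping as a discrete weighted Hardy inequality on the well values whose constant is controlled by $D_2,D_3,D_4$ alone. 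A secondary, routine point is the justification of the radial reduction for all $u\in C_c^\infty(\R^3)$ and the treatment of the outermost region when $N$ is finite.
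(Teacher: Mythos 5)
Your decomposition is the same as the paper's: the wells $I_j$ are handled through the first condition in \eqref{thm:main_magn_est_4_cond}, the gap blocks $(m_{j-1},m_j)$ are anchored at the well sitting at their right endpoint, the anchor (well-average) terms are absorbed into the potential via the third condition, and the kinetic cross terms are to be controlled via the second. (The paper implements the anchoring with a piecewise linear partition of unity, $\xi$ vanishing at the midpoints $m_j$ and equal to $1$ off the wells, so that your $\bar G_j$ term is its $\xi_j'v$ term and your cross term is its $\xi_j\partial_r v$ term; telescoping versus cutoffs is an immaterial difference.) The genuine gap is that you never carry out the one step that \emph{is} the proof: the block estimate with a $j$-independent constant. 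You yourself flag it as ``the main obstacle'', note that weighted Cauchy--Schwarz on the cross term produces a factor $\log(r/\rho)$, and then simply assert that the hypotheses ``organise the telescoping as a discrete weighted Hardy inequality on the well values''. No such inequality is stated or proved, so the proposal reduces the theorem to an unproved claim that carries all of its content.

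What fills the gap is elementary and already appears in the paper, in the proof of Theorem~\ref{thm:tech_magn_est_3}: if $f\in H^1(a,b)$ and $f(b)=0$, then
\begin{align*}
\int_a^b|f(t)|^2\,t\,dt\;\le\;\frac{(b-a)^2}{2}\int_a^b|f'(t)|^2\,t\,dt .
\end{align*}
The point is that the anchoring is at the \emph{right} endpoint: one uses the unweighted bound $|f(t)|^2\le(b-t)\int_t^b|f'(s)|^2\,ds$ and, in the Fubini step, replaces the weight $t$ by $s$ at no cost, because $t\le s$ in the relevant region and the weight is increasing. No logarithm ever appears; the log in your attempt comes only from choosing the weighted Cauchy--Schwarz split $\bigl(\int|f'|\,ds\bigr)^2\le\int|f'|^2\,s\,ds\int s^{-1}\,ds$, which is the wrong move once you are anchored outward. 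Granting this, the argument closes exactly as in the paper: on each block $K_j$ freeze the monotone weight $(1+r^2)^{-1}$ at its supremum over the block, apply the inequality above to $\xi_j v$ (which vanishes at $m_j$), and split $|\partial_r(\xi_jv)|^2\le 2|\xi_j\partial_rv|^2+2|\xi_j'v|^2$; the second and third conditions of \eqref{thm:main_magn_est_4_cond} --- used, as in the paper's displayed computation, with $(m_j-m_{j-1})^2$ in the numerators --- bound the first piece by $D_3\int|\partial_rv|^2\,r\,dr$ and absorb the second into $D_4L_j\int_{I_j}|v|^2\,r\,dr\le D_4\int w_2|v|^2\,r\,dr$, yielding $D_5=(2\max\{D_3,\max\{D_2,D_4\}\})^{-1}$. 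Two smaller remarks: your reduction to the spherical average $G$ is harmless but unnecessary, since the paper runs the one-dimensional argument in $v(r,\omega)$ for fixed $\omega$ and integrates over the sphere at the end; and the point you dismiss as routine, the outermost region for finite $N$, is not routine --- for finite $N$ the claimed inequality actually fails (beyond the last well one faces the critical two-dimensional radial operator, and logarithmic test functions defeat the weight $(1+r^2)^{-1}$), so the theorem and the paper's proof, which sums over all $j\ge 0$, implicitly concern $N=\infty$.
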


\subsection{Proofs of Theorems~\ref{thm:tech_magn_est_1} - \ref{thm:tech_magn_est_4}}
\begin{proof}[Proof of Theorem~\ref{thm:tech_magn_est_1}]
	We define
	\begin{align*}
		k_A[u] &:= \int_0^{2\pi}\int_0^{\pi}\left|\left(\frac{-i \partial_\phi}{\sin \theta} - r A_{\phi}\right) u\right|^2\,\sin\theta d\theta d\phi\\
		&= \int_0^{\pi}\frac{1}{\sin^2\theta}\left[\int_0^{2\pi}\left|K_{r,\theta}u\right|^2\,d\phi\right]\sin\theta\, d\theta.
	\end{align*}
	Here we have introduced the operator 
	\begin{align*}
		K_{r,\theta}u =(-i\partial_\phi - r \sin\theta A_{\phi}) u,
	\end{align*}
	which we will study with periodic boundary conditions on $H^1(0,2\pi)$ for fixed values of $r,\theta$, following the arguments given in \cite{LW}. The 
	eigenvalues are given by
	\begin{align*}
		\lambda_k(r,\theta) = k - \frac {r\sin\theta}{2\pi} \int_0^{2\pi} A_{\phi}(r,\phi',\theta) \, d\phi', \quad k \in \mathbb{Z},
	\end{align*}
	with corresponding eigenfunctions
	\begin{align*}
		\Pi_k(r,\theta,\phi) = \frac{1}{\sqrt{2\pi}} e^{-i \sin \theta \left(r\int_0^\phi A_{\phi} \, d\phi' - \frac{\phi r}{2\pi}\int_0^{2\pi}A_{\phi}\, d\phi' - \frac{k\phi}{\sin \theta}\right)}.
	\end{align*}
	Since the $\{\Pi_k\}_{k \in \mathbb{Z}}$ constitute a complete orthonormal system in $L^2(0,2\pi)$, we can write any function $u(r,\theta,\phi) \in L^2(\R^3)$ as
	\begin{align*}
		u(r,\theta,\phi) = \sum_{k \in \mathbb{Z}} u_k(r,\theta) \Pi_k(r,\theta,\phi).
	\end{align*}
	Replacing this representation into $k_A[u]$ and using Parseval's identity, we obtain
	\begin{align*}
		k_A[u] &= \int_0^{\pi}\frac{1}{\sin^2\theta}\left[\int_0^{2\pi}\left|K_{r,\theta}u\right|^2\,d\phi\right]\sin\theta\, d\theta\\
		&=\int_0^{\pi}\frac{1}{\sin^2\theta}\sum_{k\in \mathbb{Z}}|\lambda_k(r,\theta)|^2|u_k(r,\theta)|^2\, \sin\theta d\theta\\
		&\geq \int_0^\pi \frac{\min_{k \in \mathbb{Z}}\lambda_k^2(r,\theta)}{\sin^2\theta}\int_0^{2\pi}|u(r,\theta,\phi)|^2\, d\phi\,\sin\theta d\theta.
	\end{align*}
	From this we conclude that
	\begin{align*}
		h_A[u] &\geq \int_0^\infty k_A[u]\,dr\\
		&\geq \int_0^\infty \int_0^{\pi}\int_0^{2\pi}\frac{\min_{k \in \mathbb{Z}}\lambda_k^2(r,\theta)}{r^2\sin^2\theta}\,|u(r,\theta,\phi)|^2\, r^2 \sin\theta d\theta d\phi dr.
	\end{align*}
	Next we begin investigating the $\lambda_k(r,\theta)$ and parametrize $S(r,\theta)$ by
	\begin{align*}
		S(r,\theta) = \{(r \cos\phi' \sin\theta', r\sin\phi' \sin \theta', r \cos \theta'): 0\leq\phi'<2\pi,0\leq\theta'<\theta \}.
	\end{align*}
	We claim that the second term of $\lambda_k(r,\theta)$ is actually equal to the flux of $B$ through $S(r,\theta)$. To see this, first note that the normal vector to this 
	surface is simply $e_r$. To compute the flux of the magnetic field $B = \rot A$ through this surface, we write
	\begin{align}\label{eq:ev_flux_comp}
		\Phi(r,\theta) &= \frac{1}{2\pi}\int_{S(r,\theta)}B_r\,dS\nonumber\\
		&= \frac{1}{2\pi}\int_0^{2\pi}\int_{0}^{\theta}\frac{1}{r\sin\theta'}\left(\partial_{\theta'}(A_{\phi}(r,\theta',\phi')\sin\theta'\right))\,r^2\sin\theta'd\theta' d\phi'\nonumber\\
		&\quad  -\frac{1}{2\pi}\int_0^{2\pi}\int_{0}^{\theta}\frac{1}{r\sin\theta'}\partial_{\phi'}A_{\theta}(r,\theta',\phi')\,r^2\sin\theta' d\theta' d\phi'\nonumber\\
		&= \frac{r\sin\theta}{2\pi}\int_0^{2\pi}A_{\phi}(r,\theta,\phi')\,d\phi',
	\end{align}
	where we used the fact that $A_{\theta}$ is continuous in $\phi'$. Replacing this into the definition of the eigenvalues $\lambda_k(r,\theta)$ we obtain
	\begin{align*}
		\lambda_k(r,\theta) &= k - \frac {r\sin\theta}{2\pi} \int_0^{2\pi}A_{\phi}(r,\theta,\phi')\,d\phi'\\
		&= k - \Phi(r,\theta).
	\end{align*}
	The proof is complete.
\end{proof}

\begin{proof}[Proof of Theorem~\ref{thm:tech_magn_est_2}]
	To treat the first term of $g_A[u]$ we use the pointwise diamagnetic inequality
	\begin{align*}
		\left| -i \partial_\theta u - r A_{\theta} u\right| \geq \left|\partial_\theta |u|\right|.
	\end{align*}
	The second term of $g_A[u]$ is bounded as in the proof of Theorem~\ref{thm:tech_magn_est_1}, so that the whole quadratic form is 
	bounded from below by
	\begin{align*}
		g_A[u] \geq \int_0^{2\pi} \left[\int_0^{\pi} \left[|\partial_\theta |u||^2 + w_1(r,\theta)|u|^2\right]\sin\theta d\theta
		\right] d\phi.
	\end{align*}
	The above can then be seen as a Neumann problem for the differential operator in $\theta$ on a weighted space with the potential $w_1$. Our goal is to estimate the 
	lowest eigenvalue by a non-negative function, depending only on $r$ and $\theta_0,\theta_1$.
	\begin{lemma}\label{lem:low_ev_est}
		For any function $v \in H^1((0,\pi);\sin\theta d\theta)$ and non-negative potential $V$ satisfying
		\begin{align*}
			M < V(\theta), \quad 0<\theta_0 \leq \theta \leq \theta_1<\pi,
		\end{align*}
		for some $M>0$, we have the estimate
		\begin{align*}
			\int_0^{\pi} \left[|\partial_\theta v|^2 + V|v|^2\right]\sin\theta d\theta \geq \lambda(M,\theta_0,\theta_1)\int_0^{\pi}|v|^2\,\sin\theta d\theta,
		\end{align*}
		where
		\begin{align}\label{def:lambda_const}
			\lambda(M,\theta_0,\theta_1) = \frac{M}{2 + 4k_1((\theta_0+\theta_1)/2)M + 4k_1((\theta_0+\theta_1)/2)k_2(\theta_0,\theta_1)}.
		\end{align}
		The constants $k_1$ and $k_2$ are defined in \eqref{def:k_1_const}, \eqref{def:k_2_const}.
	\end{lemma}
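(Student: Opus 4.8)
The target is an estimate of the form $\int_0^\pi |v|^2\sin\theta\,d\theta \le \lambda^{-1}Q[v]$, where I write $Q[v]$ for the left-hand side of the asserted inequality. Since $V\ge 0$ everywhere and $V>M$ on $[\theta_0,\theta_1]$, the first move is to discard the potential outside $[\theta_0,\theta_1]$ and keep only
\[
	Q[v] \ge \int_0^\pi |\partial_\theta v|^2\sin\theta\,d\theta + M\int_{\theta_0}^{\theta_1}|v|^2\sin\theta\,d\theta =: E_1 + MP .
\]
It then suffices to bound the full weighted $L^2$-norm by a multiple of $E_1+MP$. The content of the lemma is that the potential energy only controls the mass $P$ of $v$ on the subinterval $[\theta_0,\theta_1]$, and that this localized control can be propagated to all of $(0,\pi)$ using nothing but the kinetic term $E_1$.

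The propagation mechanism is the fundamental theorem of calculus anchored at the midpoint $\theta_m:=(\theta_0+\theta_1)/2$. For any $\theta$ I would write $v(\theta)=v(\theta_m)+\int_{\theta_m}^\theta \partial_s v\,ds$ and apply Cauchy--Schwarz against the weight $\sin s$, obtaining
\[
	|v(\theta)-v(\theta_m)|^2 \le \Big(\int_0^\pi |\partial_s v|^2\sin s\,ds\Big)\,\Big|\int_{\theta_m}^\theta \frac{ds}{\sin s}\Big| \le E_1\,\Big|\int_{\theta_m}^\theta\frac{ds}{\sin s}\Big| .
\]
Inserting this into $|v(\theta)|^2\le 2|v(\theta_m)|^2+2|v(\theta)-v(\theta_m)|^2$ and integrating against $\sin\theta\,d\theta$ over $(0,\pi)$ bounds $\int_0^\pi|v|^2\sin\theta\,d\theta$ by $4|v(\theta_m)|^2+2k_1(\theta_m)E_1$, where $k_1(\theta_m)$ is exactly the weighted integral of $|\int_{\theta_m}^\theta ds/\sin s|$; this is the origin of the constant $k_1$ evaluated at the midpoint. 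Symmetrically, integrating the same representation only over $\theta\in[\theta_0,\theta_1]$, where $\sin\theta$ is bounded below, lets me control $|v(\theta_m)|^2$ by $(\cos\theta_0-\cos\theta_1)^{-1}$ times $P$ plus a further gradient contribution, the latter producing $k_2(\theta_0,\theta_1)$. Substituting the bound for $|v(\theta_m)|^2$ into the previous display and balancing the remaining cross terms by Young's inequality is what collapses the denominator into the stated form $2+4k_1 M+4k_1 k_2$, giving $\lambda(M,\theta_0,\theta_1)$ as in \eqref{def:lambda_const}.

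The main obstacle is the degeneracy of the weight $\sin\theta$ at the poles $\theta=0,\pi$: no naive Poincaré inequality is available, and $\int ds/\sin s$ diverges logarithmically there, so it is not a priori clear that the constants $k_1,k_2$ are even finite. The crucial point, and the step I would treat most carefully, is that these weighted integrals do converge: near each pole the logarithmic blow-up of $|\int_{\theta_m}^\theta ds/\sin s|$ is integrable against $\sin\theta\,d\theta$ because $\sin\theta\cdot\log(1/\sin\theta)\to 0$. Establishing this integrability, together with choosing the Young parameters so that the denominator assembles exactly into $2+4k_1 M+4k_1 k_2$, is where the real bookkeeping lies; the fundamental-theorem-of-calculus estimates themselves are routine once the weight has been handled.
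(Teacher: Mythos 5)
Your proof is correct, but it takes a genuinely different route from the paper's. The paper never evaluates $v$ at a point: it splits $\tfrac12|v|^2\le|\zeta v|^2+|(1-\zeta)v|^2$ using a piecewise-linear cutoff $\zeta$ that vanishes at the midpoint $c=(\theta_0+\theta_1)/2$ and equals $1$ outside $(\theta_0,\theta_1)$, applies a separately proved weighted Poincar\'e inequality (Lemma~\ref{lem:poincare_type_ineq}) to $\zeta v$, and absorbs both the commutator term $\zeta' v$ and the term $(1-\zeta)v$ --- each supported where $V>M$ --- into the potential energy; moreover, the appendix proof of that Poincar\'e inequality handles the weight through the monotonicity comparisons $\sin\theta\le\sin t$, resp.\ $\sin\theta\le\sin t/\sin c$, never integrating $1/\sin$, which is why the paper's constants \eqref{def:k_1_const}, \eqref{def:k_2_const} are purely algebraic and contain no logarithms. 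You instead anchor the fundamental theorem of calculus at the midpoint pointwise (legitimate, since on compact subintervals of $(0,\pi)$ the weight is bounded below and $v$ has a continuous representative), put the weight inside Cauchy--Schwarz so that the log-divergent factor $\int_{c}^{\theta}ds/\sin s$ appears, and then recover the point value $|v(c)|^2$ by averaging over $[\theta_0,\theta_1]$; your observation that the logarithmic blow-up at the poles is integrable against $\sin\theta\,d\theta$ is exactly what replaces the paper's monotonicity trick, and it is indeed the step on which your argument stands or falls. The trade-off is that your argument is shorter and self-contained (no auxiliary lemma, no cutoff bookkeeping), whereas your $k_1$ and $k_2$ come out as different, less explicit quantities (a weighted integral of $\bigl|\int_c^\theta ds/\sin s\bigr|$ and the cap mass $(\cos\theta_0-\cos\theta_1)^{-1}$), so you prove the inequality with a constant of the same structural form $M/(a+bM)$, $a,b$ depending only on $\theta_0,\theta_1$, rather than with the specific constants referenced in the statement; this is harmless, since nothing downstream of the lemma uses their exact values, only the form and positivity of $\lambda$.
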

	\begin{proof}[Proof of Lemma~\ref{lem:low_ev_est}]
		According to the assumptions we can find an interval $I=(\theta_0,\theta_1)$ where $0<M \leq V(\theta)$, and we denote the midpoint of this interval by $c 
		= (\theta_0 + \theta_1)/2$. Associated to this interval we choose a non-negative piecewise linear function $\zeta(\theta)$ given by
		\begin{align*}
			\zeta(\theta) = \left\{ \begin{array}{ll}
 						\frac{2}{(\theta_1-\theta_0)}|\theta-c| & \textrm{if $\theta \in I$}\\
						1 & \textrm{if $\theta \notin I$}
 					     \end{array} \right. 
		\end{align*}
		Clearly,
		\begin{enumerate}
			\item$0 \leq \zeta(\theta) \leq 1$,
			\item The derivative $\zeta'$ is supported on $I$ and 
			\begin{align}\label{def:k_2_const}
				|\zeta'(\theta)| \leq \frac{2}{(\theta_1-\theta_0)}=: k_2(\theta_0,\theta_1).
			\end{align}			
		\end{enumerate}
		We then write
		\begin{align*}
			\frac{1}{2}\int_0^{\pi}|v|^2\,\sin\theta d\theta \leq \int_0^{\pi}|\zeta v|^2\,\sin\theta d\theta + \int_0^{\pi}|(1-\zeta)v|^2\,\sin\theta d\theta.
		\end{align*}
		Next we need a lemma, who's proof is deferred to the Appendix.
		\begin{lemma}\label{lem:poincare_type_ineq}
			Let $0<c<\pi$. For all $f \in H^1((0,\pi);\sin\theta d\theta)$ with $f(c) = 0$, it holds that
			\begin{align*}
				\int_0^\pi |f(\theta)|^2\,\sin\theta d\theta \leq k_1(c)\int_0^{\pi}|f'(\theta)|^2\,\sin\theta d\theta,
			\end{align*}
			where
			\begin{align}\label{def:k_1_const}
				k_1(c) = \frac{\max \left\{c^2,(c-\pi)^2\right\}}{2\sin c}.
			\end{align}
		\end{lemma}
		The function $\zeta v$ vanishes for $\theta = c$, so Lemma~\ref{lem:poincare_type_ineq} yields
		\begin{align*}
			\frac{1}{2}\int_0^{\pi}|v|^2\,\sin\theta d\theta &\leq k_1\int_0^{\pi}|\partial_{\theta}(\zeta v)|^2\,\sin\theta d\theta + \int_{\theta_0}^{\theta_1}|(1-\zeta)v|^2\,\sin
			\theta d\theta\\
			&\leq 2k_1\int_0^{\pi}|\zeta\partial_{\theta}v|^2\,\sin\theta d\theta + 2k_1\int_0^{\pi}|\zeta' v|^2\,\sin\theta d\theta\\
			&\quad+ \int_{\theta_0}^{\theta_1}|(1-\zeta)v|^2\,\sin\theta d\theta\\
			&\leq 2k_1\int_0^{\pi}|\partial_{\theta}v|^2\,\sin\theta d\theta + 2k_1k_2\int_{\theta_0}^{\theta_1}|v|^2\,\sin\theta d\theta\\
			&\quad+ \int_{\theta_0}^{\theta_1}|v|^2\,\sin\theta d\theta\\
			&\leq 2k_1\int_0^{\pi}|\partial_{\theta}v|^2\,\sin\theta d\theta + (1+2k_1k_2)\int_{\theta_0}^{\theta_1}|v|^2\,\sin\theta d\theta.
		\end{align*}
		For the last term we know that on the specified interval, the potential is always larger than $M$, hence
		\begin{align*}
			(1+2k_1k_2)\int_{\theta_0}^{\theta_1}|v|^2\,\sin\theta d\theta &\leq \frac{(1+2k_1k_2)}{M}\int_{\theta_0}^{\theta_1}V|v|^2\,\sin\theta d\theta\\
			&\leq \frac{(1+2k_1k_2)}{M}\int_{0}^{\pi}V|v|^2\,\sin\theta d\theta.
		\end{align*}
		Combining the above estimates gives
		\begin{align*}
			\frac{1}{2}\int_0^{\pi}|v|^2\,\sin\theta d\theta &\leq \left(2k_1 + \frac{(1+2k_1k_2)}{M}\right)\int_0^{\pi} \left[|\partial_\theta v|^2 + V|v|^2\right]\sin\theta 
			d\theta,
		\end{align*}
		and rearranging the constants completes the proof.
	\end{proof}
	We then apply Lemma~\ref{lem:low_ev_est} to the function $|u|$ and the potential $w_1(r,\theta)$ for fixed $r>0$, so that
	\begin{align*}
		\int_0^{2\pi} \left[\int_0^{\pi} \left[|\partial_\theta |u||^2 + w_1(r,\theta)|u|^2\right]\sin\theta d\theta\right] d\phi\\ 
		\geq \frac{\lambda(M(r),\theta_0(r),\theta_1(r))}{r^2}\int_0^{2\pi}\int_0^{\pi}|u|^2\,r^2\sin\theta d\theta d\phi.
	\end{align*}
\end{proof}

\begin{proof}[Proof of Theorem~\ref{thm:tech_magn_est_3}]
	We keep $l[u]$ and treat $g_A[u]$ just as in the proof of Theorem~\ref{thm:tech_magn_est_2}, and arrive at
	\begin{align*}
		h_A[u] \geq \int_0^\infty \int_{\mathbb{S}^2}\left[ |\partial_r (\sqrt{r}u)|^2 + w_2(r)|\sqrt{r}u|^2\right]\,rd\omega dr.
	\end{align*}
	For the convenience we substitute $v:= \sqrt{r}u$ and want to show that
	\begin{align*}
		\int_0^{\infty}\left[|\partial_rv|^2 + w_2(r)|v|^2\right]\,r dr \geq D_1\int_0^{\infty}\frac{|v|^2}{1+r^2\log^2\frac{r}{m_j}}\,rdr.
	\end{align*}
	To do so, we define
	\begin{align*}
		\eta(r) = \left\{ \begin{array}{ll}
	 				m_j^{-1}|r-m_j| & \textrm{if $\alpha_j\leq r \leq \beta_j$}\\
					1 & \textrm{else}
	 				\end{array} \right. 
	\end{align*}
	Then,
	\begin{align*}
		\frac{1}{2}\int_0^{\infty} \frac{|v|^2}{1+(r\log(r/m_j))^2}\,rdr &\leq \int_0^{\infty} \frac{|\eta v|^2}{1+(r\log(r/m_j))^2}\,rdr\\
		&\quad + \int_0^{\infty} \frac{|(1-\eta)v|^2}{1+(r\log(r/m_j))^2}\,rdr
		\\
		&\leq \int_0^{\infty} \frac{|\eta v|^2}{1+(r\log(r/m_j))^2}\,rdr + \int_{\alpha_j}^{\beta_j}|v|^2\,rdr.
	\end{align*}
	We begin by estimating the first term and write
	\begin{align*}
		\int_0^{\infty} \frac{|\eta v|^2}{1+(r\log(r/m_j))^2}\,rdr &= \int_0^{m_j} \frac{|\eta v|^2}{1+(r\log(r/m_j))^2}\,rdr\\
		&\quad + \int_{m_j}^{\infty} \frac{|\eta v|^2}{1+(r\log(r/m_j))^2}\,rdr\\
		&\leq \int_0^{m_j} |\eta v|^2\,rdr + \int_{m_j}^{\infty} \frac{|\eta v|^2}{1+(r\log(r/m_j))^2}\,rdr.
	\end{align*}
	For any $f \in H^1(a,b)$ with $f(b) = 0$ we have 
	\begin{align*}
		\int_a^b|f(t)|^2t\,dt \leq \frac{(b-a)^2}{2}\int_a^b|f'(t)|^2t\,dt, 
	\end{align*}	
	and thus
	\begin{align*}
		\int_0^{m_j} |\eta v|^2\,rdr \leq \frac{m_j^2}{2} \int_0^{m_j} |\partial_r(\eta v)|^2\,rdr.
	\end{align*}
	Furthermore,
	\begin{align*}
		 \int_{m_j}^{\infty} \frac{|\eta v|^2}{1+(r\log(r/m_j))^2}\,rdr &\leq \int_{m_j}^{\infty} \frac{|\eta v|^2}{(r\log(r/m_j))^2}\,rdr\\
		 &\leq 4\int_{m_j}^{\infty}|\partial_r(\eta v)|^2\,rdr,
	\end{align*}	
	which follows from an integration by parts argument. Hence,
	\begin{align*}
		\int_0^{\infty} \frac{|\eta v|^2}{1+(r\log(r/m_j))^2}\,rdr \leq n_1 \int_0^{\infty}|\partial_r(\eta v)|^2\,rdr,
	\end{align*}
	where $n_1(m_j)= \max\left\{4,m_j^2/2\right\}$. From the properties of $\eta$ we deduce that 
	\begin{align*}
		n_1 \int_0^{\infty}|\partial_r(\eta v)\,rdr \leq 2n_1\int_0^{\infty}|\partial_rv|^2\,rdr + 2n_1n_2 \int_{\alpha_j}^{\beta_j}|v|^2\,rdr,
	\end{align*}
	with $n_2(\alpha_j,\beta_j) = \frac{2}{(\beta_j-\alpha_j)}$. Hence
	\begin{align*}
		\frac{1}{2}\int_0^{\infty} \frac{|v|^2}{1+(r\log(r/c))^2}\,rdr &\leq 2n_1\int_0^{\infty}|\partial_rv|^2\,rdr\\
		&\quad+ (2n_1n_2+1) \int_{\alpha_j}^{\beta_j}|v|^2\,rdr\\
		&\leq 2n_1\int_0^{\infty}|\partial_rv|^2\,rdr\\
		&\quad+ \frac{(2n_1n_2+1)}{L_j} \int_{\alpha_j}^{\beta_j}w_2(r)|v|^2\,rdr.
	\end{align*}
	Rearranging the constants gives
	\begin{align*}
		D_1 = \frac{L_j}{4n_1L_j + 4n_1n_2+2}.
	\end{align*}
\end{proof}

\begin{proof}[Proof of Theorem~\ref{thm:tech_magn_est_4}]
	As previously,
	\begin{align*}
		h_A[u] \geq \int_0^\infty \int_{\mathbb{S}^2}\left[ |\partial_rv|^2 + w_2(r)|v|^2\right]\,rd\omega dr,
	\end{align*}
	where $v= \sqrt{r}u$. We begin by defining the function $\xi(r) = \sum_{j=0}^{\infty}\xi_j(r)$, where
	\begin{align*}
		\xi_0 = \left\{ \begin{array}{ll}
 				1 & \textrm{if $0 \leq r < \alpha_0$}\\
				1-\frac{r}{m_0} & \textrm{if $\alpha_0 \leq r \leq m_0$}
 				\end{array} \right.
	\end{align*}
	and for $j \geq 1$,
	\begin{align*}
		\xi_j(r) = \left\{ \begin{array}{ll}
				\frac{r}{m_j} & \textrm{if $m_{j-1} \leq r < \beta_{j-1}$}\\
 				1 & \textrm{if $\beta_{j-1} \leq r < \alpha_j$}\\
				1-\frac{r}{m_0} & \textrm{if $\alpha_j \leq r \leq m_j$}.
 				\end{array} \right.
	\end{align*}
	We then have
	\begin{align*}
		\frac{1}{2}\int_0^{\infty}\frac{|v|^2}{1+r^2}\,rdr \leq \int_0^{\infty} \frac{|\xi v|^2}{1+r^2}\,rdr + \int_0^{\infty} \frac{|(1-\xi)v|^2}{1+r^2}\,rdr.
	\end{align*}
	Since the supports of the $\xi_j$ are disjoint, we obtain
	\begin{align*}
		\int_0^{\infty} \frac{|(1-\xi)v|^2}{1+r^2}\,rdr &= \sum_{j=1}^{\infty}\int_{I_j} \frac{|(1-\xi)v|^2}{1+r^2}\,rdr\\
		&\leq \sum_{j=1}^{\infty} \frac{1}{1+\alpha_j^2}\int_{I_j}|v|^2\,rdr\\
		&\leq \sum_{j=1}^{\infty} \frac{1}{L_j(1+\alpha_j^2)}\int_{I_j}w_2(r)|v|^2\,rdr\\
		&\leq D_2 \int_0^{\infty}w_2(r)|v|^2\,rdr.
	\end{align*}
	Next, set $K_j = \supp \xi_j$. Then
	\begin{align*}
		\int_0^{\infty} \frac{|\xi v|^2}{1+r^2}\,rdr &\leq \sum_{j=0}^{\infty}\frac{1}{1+\alpha_j^2}\int_{K_j}|\xi_jv|^2\,rdr\\
		&\leq \sum_{j=0}^{\infty}\frac{|K_j|^2}{1+\alpha_j^2}\left(\int_{K_j}|\xi_j'v|^2\,rdr + \int_{K_j}|\partial_rv|^2\,rdr\right)\\
		&\leq D_3 \int_0^{\infty}|\partial_r v|^2\,rdr + \sum_{j=0}^{\infty}\frac{|K_j|^2}{1+\alpha_j^2}\int_{K_j}|\xi_j'v|^2\,rdr.
	\end{align*}
	Now we study the term containing $\xi_j' v$ in more detail an obtain
	\begin{align*}
		\sum_{j=0}^{\infty}\frac{|K_j|^2}{1+\alpha_j^2}\int_{K_j}|\xi_j'v|^2\,rdr &\leq \max\left\{\frac{m_0^2}{1+\alpha_0^2},\frac{(m_1-m_0)^2}{1+
		\alpha_1^2}\right\}\frac{4}{|I_0|^2}\int_{I_0}|v|^2\,rdr\\
		&\quad+ \sum_{j=1}^{\infty}\max\left\{\frac{(m_j-m_{j-1})^2}{1+\alpha_j^2},\frac{(m_{j+1}-m_j)^2}{1+\alpha_{j+1}^2}\right\}\frac{4}{|I_j|^2} \times\\
		&\quad \times \int_{I_j}|v|^2\,rdr\\
		&\leq 4D_4\sum_{j=0}^{\infty}\int_{I_j}L_j|v|^2\,rdr\\
		&\leq 4D_4\int_0^{\infty}w_2(r)|v|^2\,rdr.
	\end{align*}
	Collecting all the constants gives
	\begin{align*}
		D_5\int_0^{\infty}\frac{|v|^2}{1+r^2}\,rdr \leq \int_0^{\infty}\left[|\partial_rv|^2 + w_2(r)|v|^2\right]\,rdr,
	\end{align*}
	where $D_5=(2\max\{D_3,\max\{D_2,D_4\}\})^{-1}$. The proof is complete.
\end{proof}

\subsection{Proofs of Theorem~\ref{thm:main_magn_est_1} and Theorem~\ref{thm:main_magn_est_2}}
\begin{proof}[Proof of Theorem~\ref{thm:main_magn_est_1}]
	We recall $h_A[u]$, given in its most general form by
	\begin{align*}
		h_A[u] := l[u] + \int_0^{\infty}\int_{\mathbb{S}^2} \left|(-i\nabla_{\omega}-rA_{\omega})u\right|^2\, d\omega\,dr.
	\end{align*}
	We know by assumption that there exists an $\omega_0 \in \mathbb{S}^2$ and $R>0$ with $B_r(R\omega_0)\neq 0$. Without loss of generality we may assume that $
	\omega_0 = (0,0,1)$, otherwise we rotate our coordinates. Since $B$ is continuous, $B_r$ is non-zero and of constant sign for $r=R$ and $0<\theta_0(R)\leq \theta 
	\leq \theta_1(R)<\pi$, so that the flux through this spherical cap is non-zero. By Theorem~\ref{thm:tech_magn_est_2},
	\begin{align*}
		0<M(R)\leq w_1(R,\theta), \quad 0<\theta_0(R)\leq \theta \leq \theta_1(R) < \pi.
	\end{align*}
	It is then again the continuity of $B$ that ensures that $0<\lambda(M(r),\theta_0(r),\theta_1(r))$ for $0<R-\eps \leq r \leq R+\eps$, for some $\eps > 0$. From this we 
	conclude that
	\begin{align*}
		0<w_2(r) = \frac{\lambda(M(r),\theta_0(r),\theta_1(r))}{r^2}, \quad 0< R-\eps < r < R+\eps.
	\end{align*}
	and the result follows from Theorem~\ref{thm:tech_magn_est_3}.
\end{proof}

\begin{proof}[Proof of Theorem~\ref{thm:main_magn_est_2}]
	We again suppose that we can measure the flux with respect to the axis $(0,0,1)$, otherwise we rotate the coordinate frame. We claim that in this setting the conditions 
	of Theorem~\ref{thm:tech_magn_est_4} are satisfied. Indeed, if $\Phi(r,\theta)$ satisfies this two-sided estimate, then
	\begin{align*}
		0<w_2(r) = \frac{\lambda(C_2,\theta_0,\theta_1)}{r^2} =: \frac{C_5}{r^2}.
	\end{align*}
	If we define 
	\begin{align*}
		I_j &= (r_0+j,r_0+(j+1)),\\
		L_j &= \frac{C_5}{(r_0+j)^2},
	\end{align*}
	then the conditions of Theorem~\ref{thm:tech_magn_est_4} are easily seen to be true.
\end{proof}

\section{Special Examples}
\subsection{Asymptotics for Weak Fields}
For a general magnetic field $B$, it is interesting to study the behaviour of the potential $w_2$ in the limit $\alpha \to 0$ when $B$ is replaced with the field $B' = \alpha B$. 
By \eqref{eq:ev_flux_comp}, $\Phi'(r,\theta) = \alpha \Phi(r,\theta)$. In the limit $\alpha \to 0$, the flux will be small enough so that the minimum over all integers will be 
achieved for $k=0$, and
\begin{align*}
	w_2(\alpha) = \alpha^2 w_2,
\end{align*}
meaning that if we turn on a magnetic field, the contribution appears quadratic in the field strength.

\subsection{Ahronov-Bohm Type Fields}
We define an Ahronov-Bohm type potential in three dimensions as follows. Let
\begin{align*}
	A(x) = \frac{\alpha}{|x|}\left(\frac{-x_2}{\sqrt{x_1^2+x_2^2}}, \frac{x_1}{\sqrt{x_1^2+x_2^2}},0\right), \quad 0<\alpha \in \mathbb{R},
\end{align*}
or in spherical coordinates
\begin{align*}
	A(r,\theta,\phi) = \frac{\alpha}{r}e_\phi.
\end{align*}
A short computation reveals
\begin{align*}
	B = \alpha \frac{\cot\theta}{r^2} e_r
\end{align*}
and $\divg B = 0$ except on $\mathbb{I}_3$, where the field is singular. This vector potential generates no continuous magnetic field, yet the potential is in $L_{\loc}^2(\R^3)$, so the quadratic form is well defined. Even though the  regularity assumptions of our theorems are not fulfilled, spectral results can still be obtained due to the simplicity of the magnetic potential. This example is interesting from the point of view that if one wants to improve the constant $1/4$ of the classical Hardy term, then the field has to be sufficiently singular near the origin, something that is in analogy to the two-dimensional case.
\begin{remark}
	Note that this is not the two-dimensional Ahronov-Bohm potential lifted to three dimensions, $A(r,\theta,\phi) = \alpha(r\sin\theta)^{-1}e_{\phi}$. We do not intend to 
	study this potential, since its components are not in $L_{\loc}^2(\R^3)$, something that causes delicate technical problems. 
\end{remark}
For this magnetic vector potential we obtain
\begin{align*}
	h_A[u] \geq \int_0^\infty \int_0^{\pi}\int_0^{2\pi} \left[|\partial_\theta u|^2 + \frac{1}{\sin^2\theta}\left|(-i \partial_\phi - \alpha\sin\theta)u\right|^2\right]\, \sin\theta d\theta d
	\phi dr
\end{align*}
Dropping the kinetic term in $\theta$ and proceeding with spectral analysis of the operator on the circle immediately implies the estimate
\begin{align*}
	h_A[u] &\geq \min_{\theta \in (0,\pi)}\min_{k \in \mathbb{Z}}\left(\frac{k}{\sin\theta} - \alpha\right)^2 \int_{\R^3} \frac{|u(x)|^2}
	{|x|^2}\,dx\\
	&= \min(\alpha,1-\alpha)^2 \int_{\R^3} \frac{|u(x)|^2}{|x|^2}\,dx
\end{align*}
for $0<\alpha<1$. When $\alpha \geq 1$, there exist a $k_0$ and a $\theta_0$ so that $k_0/\sin\theta_0 = \alpha$, so the inequality ceases to hold.

The situation can however be remedied with the help of Lemma~\ref{lem:low_ev_est}. As in the proof of Theorem~\ref{thm:tech_magn_est_2} we keep the kinetic term in $\theta$ so that
\begin{align*}
	h_A[u] &\geq \int_0^\infty \int_0^{\pi}\int_0^{2\pi} \left[|\partial_\theta u|^2 + \frac{1}{\sin^2\theta}\left|(-i \partial_\phi - \alpha\sin\theta)u\right|^2\right]\, \sin\theta d\theta d
	\phi dr\\
	&\geq \int_0^\infty \int_0^{\pi}\int_0^{2\pi} \left[|\partial_\theta u|^2 + \frac{\min_{k\in \mathbb{Z}}(k-\alpha\sin\theta)^2}{\sin^2\theta}|u|^2\right]\, \sin\theta d\theta d\phi dr.
\end{align*}
For any $\alpha>0$ we pick $\theta_0=\arcsin(1/2\alpha)/2$ and $\theta_1=\arcsin(1/2\alpha)$, so that
\begin{align*}
	0 < \alpha^2 \leq \frac{\min_{k \in \mathbb{Z}}(k-\alpha \sin\theta)^2}{\sin^2\theta},\quad 0<\theta_0\leq \theta \leq \theta_1 < \pi.
\end{align*}
and thus
\begin{align*}
	h_A[u] \geq C'(\alpha)\int_{\R^3} \frac{|u(x)|^2}{|x|^2}\,dx.
\end{align*}
This shows that the constant $1/4$ can be improved to $1/4+C'(\alpha)$ with the help of a magnetic field, $C'(\alpha)$ being an universal constant computed through \eqref{def:lambda_const}. Our methods provide unfortunately no information on the optimality of the constant.

\appendix
\section{Proof of the Technical Lemma}
\begin{proof}[Proof of Lemma~\ref{lem:poincare_type_ineq}]
	It suffices to prove the inequality in the case when $f \in C^{\infty}([0,\pi])\, \cap\, H^1((0,\pi);\sin\theta d\theta)$, since this space is dense in 
	$H^1((0,\pi);\sin\theta d\theta)$. This stems from the fact that 
	\begin{align*}
		\frac{1}{\sqrt{2}}\norm{f}_{H^1((0,\pi);d(\theta)d\theta)} \leq \norm{f}_{H^1((0,\pi);\sin\theta d\theta)} \leq \norm{f}_{H^1((0,\pi);d(\theta)d\theta)},
	\end{align*}
	where $d(\theta) = \dist(\theta,\partial(0,\pi))$, and
	\begin{align*}
		\overline{C^{\infty}([0,\pi])\, \cap\, H^1((0,\pi);d(\theta)d\theta)} = H^1((0,\pi);d(\theta)d\theta)
	\end{align*}
	was shown in \cite{Ku}.
	
	Assume first that $0<c\leq \pi/2$. Also, we start by restricting our attention to the interval $(0,c)$ and we write $f(\theta) = -\int_{\theta}^cf'(t)\,dt$, so that
	\begin{align*}
		\int_0^c |f(\theta)|^2\,\sin\theta d\theta &\leq \int_0^c \sin\theta \left(\int_\theta^c|f'(t)|\,dt\right)^2 d\theta\\
		&\leq \int_0^c \sin\theta (c-\theta) \left(\int_\theta^c|f'(t)|^2\,dt\right) d\theta
	\end{align*}
	where the last step follows from the Cauchy-Schwartz inequality. We then use that $\sin\theta \leq \sin t$, for $\theta \leq t$ by our assumption on $c$, so that
	\begin{align*}
		\int_0^c |f(\theta)|^2\,\sin\theta d\theta &\leq \int_0^c (c-\theta) \left(\int_\theta^c|f'(t)|^2\,\sin t dt\right) d\theta\\
		&\leq \int_0^c (c-\theta)\,d\theta \int_0^c|f'(t)|^2\,\sin t dt\\
		&= \frac{c^2}{2}\int_0^c|f'(\theta)|^2\,\sin\theta d\theta.
	\end{align*}
	On the interval $(c,\pi)$ we write $f(\theta) = \int_c^{\theta}f'(t)\,dt$, and analogous to the previous computation we obtain
	\begin{align*}
		\int_c^\pi |f(\theta)|^2\,\sin\theta d\theta \leq \int_c^\pi \sin\theta (c-\theta) \left(\int_c^\theta|f'(t)|^2\,dt\right) d\theta.
	\end{align*}
	By our choice of $c$, $\sin\theta \leq \frac{\sin t}{\sin c}$ for $c \leq t \leq \theta$, hence
	\begin{align*}
		\int_c^\pi |f(\theta)|^2\,\sin\theta d\theta &\leq \int_c^{\pi} \frac{(c-\theta)}{\sin c}\left(\int_{c}^{\theta}|f'(t)|^2\,\sin t dt\right)\,d\theta\\
		&\leq \frac{(\pi-c)^2}{2\sin c}\int_c^{\pi}|f'(\theta)|^2\,\sin\theta d\theta.
	\end{align*}
	We can thus conclude that for this choice of $c$ we have
	\begin{align*}
		k_1(c) = \max\left\{\frac{c^2}{2},\frac{(\pi-c)^2}{2\sin c}\right\}.
	\end{align*}
	
	The proof for $\pi/2 < c < \pi$ is very similar, and one obtains
	\begin{align*}
		k_1(c) = \max\left\{\frac{c^2}{2\sin c},\frac{(\pi-c)^2}{2}\right\},
	\end{align*}
	so that for arbitrary $c \in (0,\pi)$ the constant $k_1(c)$ computes to
	\begin{align*}
		k_1(c) = \frac{\max\{c^2,(c-\pi)^2\}}{2\sin c}.
	\end{align*}
\end{proof}

\newpage
\paragraph{Aknowledgements.} It is a pleasure to thank A. Laptev, A. Sobolev, M. Fraas, G.M. Graf, D. Lundholm and H. Kovarik for fruitful discussion. T.E. is supported by the Swedish Research Council grant Nr. FS-2009-493, F.P. by Nr. 80504801. Furthermore, financial support from the Svenska Matematiker Samfundet is gratefully acknowledged. A large part of this work was done at the Institute Mittag-Leffler in Stockholm during the program "Hamiltonians in Magnetic Fields"; both authors would like to thank the organisers and the staff.


\vspace{1cm}
\noindent
Tomas Ekholm and Fabian Portmann\\
KTH Royal Institute of Technology\\
Department of Mathematics\\
Lindstedtsv\"agen 25\\
10044 Stockholm, Sweden
\begin{tabbing}
e-mail: \=\href{mailto:fabianpo@math.kth.se}{fabianpo@math.kth.se}\\
\>\href{mailto:tomase@math.kth.se}{tomase@math.kth.se}
\end{tabbing}

\begin{thebibliography}{99}	
	\bibitem{D} E. B. Davies, \textit{A Review of Hardy Inequalities}, The MazÕya Anniversary Collection, Vol. 2, in Oper. Theory: Advances and Applications, Vol. 
	\textbf{110}, p. 55 -- 67, Birkh\"auser Verlag (1999).
	\bibitem{W1} T. Weidl, \textit{Remarks on virtual bound states for semi-bounded operators}, Comm. Partial Differ. Eqs. \textbf{24}(1Ð2), p. 25 -- 60 (1999).
	\bibitem{EF} T. Ekholm and R. L. Frank, \textit{On Lieb-Thirring Inequalities for Schr\"odingier Operators with Virtual Level}, Commun. Math. Phys. \textbf{264}, p. 
	725 -- 740 (2006).
	\bibitem{BS} M. S. Birman and M. Z. Solomyak, \textit{Schršdinger Operator. Estimates for Number of Bound States as Function-Theoretical Problem}, Spectral theory 
	of operators (Novgorod, 1989), 1-54. Amer. Math. Soc. Transl. Ser. 2, \textbf{150}, Amer. Math. Soc., Providence, RI (1992).
	\bibitem{S1} B. Simon, \textit{The bound state of weakly coupled Schr\"odinger operators in one and two dimensions}, Ann. Phys. \textbf{97}, no. 2, p. 279 -- 288 
	(1976).
	\bibitem{S2} B. Simon, \textit{Maximal and minimal Schr\"odinger forms}, J. Operator Theory \textbf{1}, p. 37 -- 47 (1979).
	\bibitem{LL} E. H. Lieb and M. Loss, \textit{Analysis}, 2nd edn. AMS, Providence (2001).
	\bibitem{BLS} A. Balinsky, A. Laptev and A. V. Sobolev, \textit{Generalized Hardy inequality for the magnetic Dirichlet forms}, J. Statist. Phys. \textbf{116} , no. 1-4, 
	p. 507 -- 521 (2004).
	\bibitem{L} H. Leinfelder, \textit{Gauge Invariance of Schr\"odinger Operators and related spectral properties}, J. Operator Theory \textbf{9}, p. 163 -- 179 (1983).
	\bibitem{LW} A. Laptev and T. Weidl, \textit{Hardy Inequalities for magnetic Dirichlet Forms}, in Oper. Theory: Advances and Applications, Vol. \textbf{108}, p. 299 -- 
	305, Birkh\"auser Verlag (1999).
	\bibitem{K} H. Kovarik, \textit{Eigenvalue bounds for two-dimensional magnetic Schr\"odinger operators}, J. Spectr. Theory \textbf{1}, p. 363 -- 387 (2011).
	\bibitem{AHS} J. Avron, I. Herbst and B. Simon, \textit{Schr\"odinger Operators with Magnetic Fields, I. General Interactions}, Duke Math. Journal Vol. \textbf{45}, No. 4, 
	p. 847 -- 883 (1978).
	\bibitem{W2} T. Weidl, \textit{A Remark on Hardy type inequalities for critical Schr\"odinger operators with magnetic fields}, Oper. Theory: Advances and Applications, 
	Vol.  \textbf{110}, p. 345 -- 352 (1999).
	\bibitem{ET} M. Enstedt, K. Tintarev, \textit{Weighted Spectral Gap for Magnetic Schr\"odinger Operators With a Potential Term}, Potential Anal. \textbf{31}, No. 3, 
	p. 215 -- 226 (2009).
	\bibitem{Y} D. R. Yafaev, \textit{Scattering matrix for magnetic potentials with Coulomb decay at infinity}, Integral Equations Operator Theory \textbf{47}, no. 2, p. 
	217 -- 249  (2003). 
	\bibitem{Ku} A Kufner, \textit{Weighted Sobolev Spaces}, Teubner Texte zur Mathematik, Bd. 31 Wiley and Sons, New York (1985).
\end{thebibliography}
\end{document}